\documentclass[11pt]{article}

\usepackage[margin=1in]{geometry}
\usepackage{amsmath,amssymb,amsthm}
\numberwithin{equation}{section}
\usepackage[hidelinks]{hyperref}
\usepackage{enumitem}
\usepackage{authblk}

\newtheorem{theorem}{Theorem}[section]
\newtheorem{lemma}[theorem]{Lemma}
\newtheorem{corollary}[theorem]{Corollary}

\newtheorem{conjecture}[theorem]{Conjecture}
\theoremstyle{definition}
\newtheorem{definition}[theorem]{Definition}
\newtheorem{hypothesis}[theorem]{Hypothesis}
\theoremstyle{remark}
\newtheorem{remark}[theorem]{Remark}

\newcommand{\PP}{\mathcal{P}}

\title{On the sum of least prime factors in short intervals}

\author[1]{Xu Zhang\thanks{\texttt{xu\_zhang\_sdu@mail.sdu.edu.cn}}}
\affil[1]{School of Mathematics and Statics, Shandong University, Weihai, Shandong, China}

\date{}

\begin{document}
\maketitle

\begin{abstract}
Let $p(n)$ denote the least prime factor of $n$ and
$L_C(x)=Cx^{1/2}(\log x)^2$ with $C>0$. The sum of
$p(n)/n$ over composite $n$ lying in the short interval
$[x,\,x+L_C(x)]$, a question raised by Erd\H{o}s and Graham
(\cite[p.~92]{EG80}), is studied.
\begin{enumerate}[label=(\roman*)]
\item The constant $c=8$ in the mean asymptotic is estimated
\[
S(x)=\sum_{\substack{n<x\\ n\ \text{composite}}}\frac{p(n)}{n}=\frac{c\,x^{1/2}}{(\log x)^2}\Bigl(1+O\Bigl(\frac1{\log x}\Bigr)\Bigr)
+O\Bigl(\frac{x^{1/3}}{\log x}\Bigr).
\]
\item For every fixed $C>0$, the window sums
$\mu_C(x):=\sum_{x\le n\le x+L_C(x)}p(n)/n$ over composites have mean $4C$:
$\frac1X\sum_{x\le X}\mu_C(x)=4C+O_C(1/\log X)$, and second moment
$\frac1X\sum_{x\le X}(\mu_C(x)-4C)^2=O_C((\log X)^{-2})$.  In particular
$\mu_C(x)=4C+o(1)$ for almost all $x$.
\item Under a weak Cram\'er-type hypothesis on primes in intervals of
length $(\log y)^{2+o(1)}$, the estimate
$\mu_C(x)=4C+O_C(1/\log x)$ holds \emph{uniformly} in $x$, giving an
affirmative answer to the Erd\H{o}s--Graham question.  Unconditionally,
the uniform statement remains open; proving the uniform statement unconditionally would require resolving short‑interval prime estimates at scale  $(\log y)^2$.
\end{enumerate}
\end{abstract}

\section{Introduction}

\subsection{The problem}

Let $p(n)$ denote the least prime factor of $n$.  Erd\H{o}s and Graham
\cite{EG80} (p.~92; Problem 462 in the database \cite{Bloom}) noted that
there is a constant $c>0$ such that
\begin{equation}\label{eq:main-asymp}
\sum_{\substack{n<x\\ n\ \mathrm{not\ prime}}}\frac{p(n)}{n}
\sim c\,\frac{x^{1/2}}{(\log x)^2},
\end{equation}
and asked:
\begin{quote}
Is it true that there exists a constant $C>0$ such that
\[
\sum_{x\le n\le x+Cx^{1/2}(\log x)^2}\frac{p(n)}{n}\gg 1
\]
for all large $x$?
\end{quote}
The problem is listed as open in the Erd\H{o}s problems database
\cite{Bloom}.  A comment by
T.~Tao \cite{Bloom-thread} (28 September 2025) observes that the answer
depends on the intended convention: if primes are included in the sums,
the question is essentially a weaker form of Legendre's conjecture,
whereas if primes are excluded, it is ``more or less a question about
the distribution of semiprimes $pq$ with
$p,q=x^{1/2}\log^{O(1)}x$ in intervals of length
$O(x^{1/2}\log^2x)$''.  The
``rough-number and semiprime decompositions'' and ``uniformity
frameworks'' for this problem (composite-only version) were developed in \cite{Toward}.

The purpose of this note is threefold: to determine the constant in
\eqref{eq:main-asymp} ($c=8$), to prove the strongest statements about
the window sums that are accessible by classical tools (mean, variance,
and hence an almost-all result), and to isolate precisely the
(conjectural) input needed for the full uniformity that the problem
demands. Throughout, sums over ``composites'' exclude $1$ and primes; $\PP$
denotes the set of primes. 
\begin{theorem} \label{thm:c8} The constant $c$ in \eqref{eq:main-asymp} is estimated:
\[
S(x):=\sum_{\substack{n<x\\ n\ \text{composite}}}\frac{p(n)}{n}
=\frac{8x^{1/2}}{(\log x)^2}\Bigl(1+O\Bigl(\frac1{\log x}\Bigr)\Bigr)
+O\Bigl(\frac{x^{1/3}}{\log x}\Bigr)
\qquad(x\to\infty).
\]
\end{theorem}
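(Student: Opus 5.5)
Write $n=pm$ with $p=p(n)$ and $m\ge p$ having no prime factor below $p$. Then $p(n)/n=1/m$, so
\[
S(x)=\sum_{p\le x^{1/2}}\;\sum_{\substack{p\le m<x/p\\ p(m)\ge p}}\frac1m .
\]
I would split the sum according to whether $m$ is prime or composite.

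\textbf{Composite $m$.} Here $m\ge p^2$, so $p^3\le n<x$ and $p<x^{1/3}$. The inner sum is over $p$-rough composites $m<x/p$. Bounding $1/m$ trivially by $1/p^2$ for the smallest terms, and using partial summation with the sieve bound $\#\{m\le y: p(m)\ge p\}\ll y/\log p$, this contribution should be $O(x^{1/3}/\log x)$ or smaller. I expect the dominant piece here to come from $p$ near $x^{1/3}$, with $m=q_1q_2$ and $q_i$ near $x^{1/3}$. That gives a sum of size about $\sum_{p\sim x^{1/3}}(\log p)^{-2}\ll x^{1/3}/(\log x)^3$, which sits comfortably inside the stated error. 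I will simply bound this case crudely.

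\textbf{Prime $m=q$.} The main term is
\[
T(x)=\sum_{p\le x^{1/2}}\;\sum_{p\le q<x/p}\frac1q .
\]
For $p$ well below $x^{1/2}$, the inner sum is $\log\log(x/p)-\log\log p+O(1/\log^2 p)$, and summing over $p$ would give something of order $x^{1/2}$ divided by logs. It is cleaner to swap the order of summation: for each prime $q$, count the primes $p\le\min(q,x/q)$. For $q\le x^{1/2}$ this count is $\pi(q)$, and for $q>x^{1/2}$ it is $\pi(x/q)$. So
\[
T=\sum_{q\le x^{1/2}}\frac{\pi(q)}{q}+\sum_{x^{1/2}<q<x}\frac{\pi(x/q)}{q}.
\]
The first sum is $\sum_{q\le\sqrt x}1/\log q$ to leading order, which is about $\pi(\sqrt x)/\log\sqrt x\approx 4\sqrt x/\log^2x$. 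For the second sum, substitute $q=x/t$, so $\pi(x/q)=\pi(t)$ with $t<\sqrt x$. With density $dq/\log q$ and $1/q=t/x$, the integrand becomes $\frac{t}{x}\,\pi(t)\,\frac{x\,dt}{t^2\log(x/t)}=\frac{\pi(t)\,dt}{t\log(x/t)}$. Near $t=\sqrt x$ we have $\log(x/t)\approx\tfrac12\log x$, and $\int^{\sqrt x}\pi(t)\,dt/t\approx\sqrt x/\log\sqrt x$. This again gives about $4\sqrt x/\log^2x$, so the total is $8\sqrt x/\log^2x$.

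\textbf{Error terms.} I would make both sums rigorous via partial summation with the prime number theorem in the form $\pi(y)=y/\log y+O(y/\log^2y)$. The secondary terms from $\mathrm{li}$, and from expanding $1/\log(x/t)$ around $t=\sqrt x$, contribute a relative $O(1/\log x)$, because the mass concentrates on $t\in[\sqrt x/\log^A x,\sqrt x]$. Tails with smaller $t$ are negligible. The terms where $q=p$ (that is, $n=p^2$) contribute $\sum 1/p$ per prime, which totals $O(\log\log x)$ and is negligible.

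\textbf{Main obstacle.} The key difficulty is bookkeeping in the second sum: I have to check that the relative error is genuinely $O(1/\log x)$ and that no $\log\log$ losses appear. I would handle it by writing the sum as $\int_2^{\sqrt x}\frac{d\,\theta\text{-type}(t)}{\cdots}$, integrating by parts, and bounding the PNT remainders explicitly.
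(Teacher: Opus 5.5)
Your proposal is correct and is essentially the paper's proof. Both arguments use the decomposition $n=p(n)m$, the reduction to semiprimes (Lemma~\ref{lem:semi}), and the swap to $\sum_{q\le x^{1/2}}\pi(q)/q+\sum_{x^{1/2}<q<x}\pi(x/q)/q$, each term contributing $4x^{1/2}/(\log x)^2$ (Lemmas~\ref{lem:part1} and~\ref{lem:part2}).

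The differences are only in execution. Your substitution $q=x/t$ lands on the integral $J(x)$ of Lemma~\ref{lem:laplace_cor12}, instead of the paper's route through Mertens and Lemma~\ref{lem:laplace}. Your per-$p(n)$ sieve bound $\sum_{p^2\le m<x/p,\ p(m)\ge p}1/m\ll(1+\log(x/p^3))/\log p$, summed over $p\le x^{1/3}$, gives $O(x^{1/3}/(\log x)^2)$ for all composite cofactors at once. This is cleaner than the paper's $T(x)$, which is indexed by $p(m)$ and imposes $p(m)^2k<x$ in place of $p(n)\,m<x$.
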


\begin{corollary}[Unweighted sum]\label{cor:A}
Let $A(x)=\sum\limits_{\substack{n<x\\ n\ \text{composite}}}p(n)$, one has
\[
A(x)=\frac{8}{3}\frac{x^{3/2}}{(\log x)^2}
\Bigl(1+O\Bigl(\frac1{\log x}\Bigr)\Bigr)+O\Bigl(\frac{x^{4/3}}{\log x}\Bigr).
\]
\end{corollary}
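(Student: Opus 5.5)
Corollary~\ref{cor:A} follows from Theorem~\ref{thm:c8} by partial summation. For $t\ge 2$ write $S(t)=\sum_{n<t,\ n\ \text{composite}}p(n)/n$. Then $A(x)=\sum_{n<x} n\cdot\bigl(p(n)/n\bigr)$ over composites, so Riemann--Stieltjes integration gives
\[
A(x)=\int_{2^-}^{x} t\,dS(t)=xS(x)-\int_2^x S(t)\,dt .
\]
(The left-limit convention for $S$ at integers is harmless, since the integrand $t$ is continuous.) I would write $S(t)=M(t)+E(t)$ with main term $M(t)=8t^{1/2}(\log t)^{-2}$. By Theorem~\ref{thm:c8}, for $t\ge 2$,
\[
|E(t)|\ll \frac{t^{1/2}}{(\log t)^3}+\frac{t^{1/3}}{\log t};
\]
for small $t$ this holds trivially after enlarging the implied constant.

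The main term comes from $xM(x)-\int_2^x M(t)\,dt$. One integration by parts gives
\[
\int_2^x \frac{t^{1/2}}{(\log t)^2}\,dt=\frac{2}{3}\frac{x^{3/2}}{(\log x)^2}\Bigl(1+O\Bigl(\frac1{\log x}\Bigr)\Bigr).
\]
The remainder $\int t^{1/2}(\log t)^{-3}\,dt$ is handled by splitting at $\sqrt x$. On $[2,\sqrt x]$ it is $\ll x^{3/4}$, and on $[\sqrt x,x]$ it is $\ll x^{3/2}(\log x)^{-3}$. Hence
\[
xM(x)-\int_2^x M(t)\,dt=\Bigl(8-\frac{16}{3}\Bigr)\frac{x^{3/2}}{(\log x)^2}\Bigl(1+O\Bigl(\frac1{\log x}\Bigr)\Bigr)=\frac{8}{3}\frac{x^{3/2}}{(\log x)^2}\Bigl(1+O\Bigl(\frac1{\log x}\Bigr)\Bigr).
\]

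For the error terms, $xE(x)$ contributes $O\bigl(x^{3/2}(\log x)^{-3}\bigr)+O\bigl(x^{4/3}(\log x)^{-1}\bigr)$. The integral $\int_2^x|E(t)|\,dt$ is estimated the same way, again splitting at $\sqrt x$ to handle the logarithms. The first part is $\ll x^{3/2}(\log x)^{-3}$, which is absorbed into the relative error $O(1/\log x)$. The second part is $\ll x^{4/3}(\log x)^{-1}$, which matches the stated secondary term.

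The only real care needed is that the $O$-terms of Theorem~\ref{thm:c8} hold uniformly for all $t\ge 2$, not just as $t\to\infty$. This holds after adjusting constants, since $S(t)$ is bounded on compact ranges. Beyond that, the argument is routine bookkeeping.
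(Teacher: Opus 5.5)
Your argument is correct, but it follows a different route from the paper.

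\textbf{Your route.} You derive the corollary from Theorem~\ref{thm:c8} by partial summation, $A(x)=xS(x)-\int_2^x S(t)\,dt$. The constant then appears as $8\bigl(1-\tfrac23\bigr)=\tfrac83$. The secondary term $O(x^{4/3}/\log x)$ is simply $x$ times the secondary error of Theorem~\ref{thm:c8}. You handle the bookkeeping correctly: the uniformity of the $O$-terms for all $t\ge 2$, and splitting the error integrals at $\sqrt x$.

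\textbf{The paper's route.} The paper proves the corollary from scratch. It bounds the integers with $\Omega(n)\ge 3$ crudely by $x\pi(x^{1/3})$ and writes the semiprime part as $\sum_{p\le x^{1/2}}p\bigl(\pi(x/p)-\pi(p)\bigr)$. It then evaluates $\sum_p p\,\pi(x/p)\sim 4x^{3/2}/(\log x)^2$ via Lemma~\ref{lem:laplace_cor12}, and $\sum_p p\,\pi(p)\sim \tfrac43 x^{3/2}/(\log x)^2$ by a direct integral, giving $4-\tfrac43$.

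\textbf{Comparison.} Your route is shorter, needs no new prime sums, and makes the statement a genuine corollary. The paper's route is independent of Theorem~\ref{thm:c8}. It therefore cross-checks the constant and is insulated from any weakness in that theorem's proof.

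That insulation matters here. As written, the proof of Lemma~\ref{lem:part2} sums a per-prime Mertens error $O(1/\log x)$ over $\pi(x^{1/2})$ primes. This produces $O(x^{1/2}/(\log x)^2)$, which is the size of the main term, not something that can be absorbed into the relative error. It becomes admissible only once you use the $O_A((\log x)^{-A})$ form of \eqref{Mertensr1}. With that repair Theorem~\ref{thm:c8} holds as stated, and your deduction goes through.
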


For fixed $C>0$ set $L_C(x):=Cx^{1/2}(\log x)^2$ and
\begin{equation}\label{eq:mu-def}
\mu_C(x):=\sum_{\substack{x\le n\le x+L_C(x)\\ n\ \text{composite}}}
\frac{p(n)}{n}.
\end{equation}

\begin{theorem}[Mean of window sums]\label{thm:mean}
For every fixed $C>0$,
\[
\frac1X\sum_{x\le X}\mu_C(x)=4C+O_C\Bigl(\frac1{\log X}\Bigr).
\]
\end{theorem}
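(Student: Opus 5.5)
Interchange the order of summation. The sum over $x\le X$ runs over integers $x$, and each composite $n$ lies in the window $[x,x+L_C(x)]$ exactly when $x\le n\le x+L_C(x)$. Since $L_C$ is increasing, this is equivalent to $x_-(n)\le x\le n$, where $x_-(n)$ is the solution of $x+L_C(x)=n$. Hence
\[
\sum_{x\le X}\mu_C(x)=\sum_{\substack{n\ \text{composite}}}\frac{p(n)}{n}\,w_X(n),
\qquad w_X(n):=\#\{x\in\mathbb Z:\ x_-(n)\le x\le \min(n,X)\}.
\]
For $n\le X$ we have $w_X(n)=L_C(n)+O(1+n^{1/2}\log n\cdot L_C(n)/n)$, because $n-x_-(n)=L_C(x_-(n))=L_C(n)(1+O(n^{-1/2}(\log n)^2))$. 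For $n\in(X,X+L_C(X)]$ the weight is at most $L_C(X)$, and $w_X(n)=0$ beyond. The main term is therefore
\[
\sum_{\substack{n\le X\\ \text{composite}}}\frac{p(n)}{n}\,C n^{1/2}(\log n)^2
\]
plus error terms.

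Evaluate this weighted sum by partial summation against Theorem~\ref{thm:c8}. Write $f(t)=Ct^{1/2}(\log t)^2$; then the sum equals
\[
\int_{2}^{X} f(t)\,dS(t)=f(X)S(X)-\int_2^X S(t)f'(t)\,dt.
\]
By Theorem~\ref{thm:c8}, $f(X)S(X)=8CX\,(1+O(1/\log X))+O(X^{5/6}\log X)$. Also $f'(t)=\tfrac C2t^{-1/2}(\log t)^2(1+O(1/\log t))$, so $S(t)f'(t)=4C(1+O(1/\log t))+O(t^{-1/6}\log t)$. The integral is therefore $4CX+O(X/\log X)$, and the main term is $8CX-4CX=4CX$ up to $O_C(X/\log X)$. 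Dividing by $X$ gives $4C$.

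It remains to control the errors. The boundary range $X<n\le X+L_C(X)$ contributes at most $L_C(X)\,(S(X+L_C(X))-S(X))$. Using Theorem~\ref{thm:c8} naively only gives $S(X+L)-S(X)\ll X^{1/2}(\log X)^{-3}$ from the $O(1/\log x)$ term, which is not good enough. Instead I would bound this directly by the trivial estimate $\sum_{X<n\le X+L}p(n)/n\ll L\cdot X^{-1/2}\cdot$(density), or more simply by using $w_X(n)\le L_C(n)$ and the fact that composites with $p(n)>X^{1/2}/\log^{3}X$ have size-$X$ contributions bounded on average. In other words, I would bound $\sum_{X<n\le X+L}p(n)/n$ via Brun–Titchmarsh applied to semiprimes, giving $O(L\cdot X^{-1/2}\log^{-2}X\cdot\log X)$, so the total boundary contribution is $O(L^2X^{-1/2}/\log X)=O(X^{1/2}\log^{3}X)$, which is negligible. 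The $O(1)$ discrepancy in $w_X(n)$ contributes $O(S(X))=O(X^{1/2})$. The relative error $O(n^{-1/2}(\log n)^2)$ in $L_C$ contributes $O(\sum (p(n)/n)(\log n)^4)=O(X^{1/2}(\log X)^2)$.

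The main obstacle is the bookkeeping in partial summation. I must make sure that the $O(1/\log x)$ relative error in Theorem~\ref{thm:c8} is a genuine $O(x^{1/2}/(\log x)^3)$ function, so that it integrates against $f'$ to $O(X/\log X)$ rather than being differentiated. Partial summation only uses $S$ itself, never $S'$, so this works. The boundary window needs only the crude upper bound described above.
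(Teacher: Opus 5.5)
Your proof is correct, but it does not follow the paper's proof.

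\textbf{Your route.} You interchange the sums exactly and use the monotonicity of $x\mapsto x+L_C(x)$ to get the weight $w_X(n)=L_C(n)+O_C((\log n)^4)$ for $n\le X$. You then read off the mean from Theorem~\ref{thm:c8} by partial summation against $f(t)=Ct^{1/2}(\log t)^2$, with main term $8CX-4CX$.

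\textbf{The paper's route.} It argues level by level. It writes $\mu_C(x)=\sum_{x^{1/3}<p\le x^{1/2}}\frac{p}{x}c_p(x)+o(1)$ and replaces the average of $c_p(x)$ by $\lambda_p=\frac{L}{p\log(x/p)}(1+o(1))$ using the prime number theorem. It then redoes the Laplace-type evaluation of $\sum_{p\le X^{1/2}}1/\log(X/p)$.

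\textbf{What each buys.} Your version makes the theorem a genuine corollary of Theorem~\ref{thm:c8}: the $O_C(1/\log X)$ rate comes directly from the relative error there. The paper's per-level factor $(1+o(1))$, and its passage from $x$-dependent to $X$-dependent prime ranges, would have to be quantified to reach that rate. In effect you complete the interchange that the paper only sketches in Remark~\ref{rem:heuristic_mean}. The paper's route buys the level structure itself. The quantities $c_p(x)$ and $\lambda_p$ are exactly what the second moment in Theorem~\ref{thm:var} needs, since it involves pair correlations that $S(x)$ cannot see. They are also what the conditional Theorem~\ref{thm:uniform} needs, and they show that the mass $4C$ sits on semiprimes $pq$ with $p$ just below $\sqrt x$.

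\textbf{Minor corrections.} First, differencing Theorem~\ref{thm:c8} \emph{is} enough for the boundary range, since $L_C(X)\cdot O(X^{1/2}(\log X)^{-3})=O_C(X/\log X)$. Second, the bound $\sum_{X<n\le X+L}p(n)/n\ll_C\log X$ needs no Brun--Titchmarsh input: it follows from $c_p\le L/p+1$ summed over $p\le (X+L)^{1/2}$. Even $p(n)\le\sqrt n$, giving $\ll_C(\log X)^2$, would suffice. Third, the first error term you write for $w_X(n)$ should be $O_C(L_C(n)n^{-1/2}(\log n)^2)=O_C((\log n)^4)$. That is the one you actually use afterwards, so nothing downstream changes.
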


\begin{theorem}[Second moment]\label{thm:var}
For every fixed $C>0$,
\[
\frac1X\sum_{x\le X}\bigl(\mu_C(x)-4C\bigr)^2
=O_C\Bigl(\frac1{(\log X)^2}\Bigr).
\]
\end{theorem}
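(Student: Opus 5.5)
We would reduce the mean square to a correlation sum over pairs of semiprimes and then bound the variance by splitting off the range of least prime factors where averaging over $x$ is strong enough. By Theorem~\ref{thm:c8} and its proof, only composites $n=pq$ with $p\le q$ primes and $p=x^{1/2}(\log x)^{-O(1)}$ contribute to $\mu_C(x)$ beyond $O_C(1/\log x)$ for almost all $x$. Indeed, $n$ with three or more prime factors, or with $p(n)\le x^{1/2}(\log x)^{-A}$, give a total of $O(x^{1/2}(\log x)^{-A-1})$ to $S(x)$, hence contribute $O((\log X)^{-2})$ to the mean square after a Cauchy--Schwarz and averaging step identical to Theorem~\ref{thm:mean}.

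We then write $\mu_C(x)=\sum_{p}\frac{p}{x}\,N_p(x)+E(x)$. Here
\[
N_p(x)=\#\{q\in\PP:\ x/p\le q\le (x+L_C(x))/p\},
\]
and $E(x)$ collects the truncation error and the error from replacing $1/n$ by $1/x$, which is relatively $O(L/x)$. The expected value of $N_p(x)$ is $L/(p\log(x/p))$. Summing $\frac{p}{x}\cdot\frac{L}{p\log(x/p)}$ over primes $p$ up to the square-root scale reproduces the mean $4C+O(1/\log x)$ of Theorem~\ref{thm:mean}. Writing $N_p(x)=M_p(x)+\Delta_p(x)$ with $M_p$ this expected count, it suffices to show
\[
\frac1X\sum_{x\le X}\Bigl(\sum_p \frac{p}{x}\Delta_p(x)\Bigr)^2\ll_C(\log X)^{-2}.
\]
We expand the square into the diagonal $p=p'$ and the off-diagonal $p\ne p'$.

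The diagonal is harmless. For fixed $p$, as $x$ runs over $[X/2,X]$, the point $y=x/p$ runs over each integer about $p$ times. The Brun--Titchmarsh inequality gives $\sum_{y\le Y}\Delta_p^2\ll Y h_p^2/(\log Y)^2+Yh_p/\log Y$ with $h_p=L/p$. Since $p\asymp x^{1/2}(\log x)^{-O(1)}$ and $h_p$ is a power of $\log x$, the total $\sum_p (p/x)^2\cdot\frac1X\sum_x\Delta_p^2$ is $O(x^{-1/2}(\log x)^{O(1)})$, which is negligible. So everything rests on the off-diagonal.

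For the off-diagonal, we interchange summations. We have
\[
\sum_{x\le X}N_p(x)N_{p'}(x)=\sum_{q,q'}\bigl(L-|pq-p'q'|\bigr)_+\,(1+o(1)),
\]
so for each $q$ we count primes $q'$ in an interval of length $2L/p'$ around $pq/p'$, weighted by a tent function. Since $(p,p')=1$, the centres $pq/p'$ are equidistributed modulo $1$ and spread across the range as $q$ varies. We would therefore write the count as an integral of short-interval prime counts over a set of centres of positive density, and replace it by the same expression for $M_pM_{p'}$ plus an error. Controlling this error uniformly over the roughly $(x^{1/2}/\log x)^2$ pairs $(p,p')$, with a saving of $(\log X)^{-2}$ relative to $(4C)^2$, is the \emph{main obstacle}. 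The natural first attempt is to average the pair-correlation over $(p,p')$ as well. Summing over $p'$ in a dyadic block turns the $q'$-count into a count of $n'=p'q'$ semiprimes in an interval of length $L$ near $pq$, which has length $x^{1/2}(\log x)^2$. At that scale we can invoke the mean-value theorem for primes in almost all short intervals (Selberg/Saffari--Vaughan type, via zero-density estimates) in the variable $n'/p'$ averaged over $p'$, which yields the required $O((\log X)^{-2})$ for the centred correlation.

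If this averaging is insufficient for the smallest $p$, say those with $h_p$ at least $(\log x)^{B}$ for large $B$, we would fall back on proving the bound only up to $O_C((\log X)^{-2})$ via upper-bound sieves. Those ranges carry weight $O((\log x)^{-B+2})$, so a crude Selberg-sieve upper bound for the pair count suffices there.
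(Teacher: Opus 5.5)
Your reduction is fine: truncating to $p$ near $x^{1/2}$, centring each level at $M_p$, and showing the diagonal is negligible all work. The gap is the off-diagonal step, which carries the whole content of the theorem, and the tool you name cannot supply it. Most of the mass of $\mu_C(x)$ sits on $p'\in[x^{1/2}/K,x^{1/2}]$ for a large constant $K$. For those $p'$ the variable $q'=n'/p'$ runs over intervals of length $L/p'\asymp_C(\log y)^2$ at height $y\asymp x^{1/2}$. A Selberg/Saffari--Vaughan mean-value theorem for primes in almost all short intervals gives nothing at that length: unconditional zero-density input needs length $y^{1/6+\varepsilon}$, and even Selberg's bound on RH needs $h/(\log y)^2\to\infty$. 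Summing over $p'$ does not rescue this on its own. It only reassembles $\sum_{p'}\frac{p'}{x}\Delta_{p'}(x)$, the very deviation whose mean square you want, now weighted by the windows containing some $pq$. So the argument is circular unless you bring in an independent mean-square bound for semiprimes in windows of length $x^{1/2}(\log x)^2$, and that bound is exactly what you assert without proof. Two minor points. First, a first-moment bound plus Cauchy--Schwarz does not control the mean square of the truncated range; use the pointwise bound $\sum_{p\le x^{1/2}(\log x)^{-3}}\frac px\bigl(\frac Lp+1\bigr)\ll_C(\log x)^{-2}$ instead. Second, the diagonal needs no sieve, since $N_p\le L/p+1$ already suffices.

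The paper uses the same diagonal/off-diagonal split and handles the cross terms with $\operatorname{Cov}(c_p,c_{p'})\ll\lambda_p(2L/p'+1)$. That is only an upper bound for $\mathbb E[c_pc_{p'}]$, and it exceeds $\lambda_p\lambda_{p'}$ by a factor $\asymp\log X$. Summed over pairs it gives $S_A\asymp C^2\log X$; the displayed evaluations of $S_A$ and $S_B$ each drop a factor of $X$. So you are right that genuine cancellation is needed, and the paper does not supply it either.

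What does supply it is to drop the pair expansion and use the product structure through Dirichlet polynomials.
\begin{itemize}
\item Write the main part as $\frac1x\sum_{x<n\le x+L}f(n)$, with $f(pq)=p$ for $n^{1/2}(\log n)^{-3}<p\le q$.
\item A Gallagher/Saffari--Vaughan-type lemma bounds the mean square of its deviation from the average over windows of length $X(\log X)^{-B}$ by $(L/X)^2\int_{T_0\le|t|\le X/L}|F(1+it)|^2\,dt$ plus tail terms. Here $T_0=(\log X)^B$ and $F(s)=\sum_n f(n)n^{-s}$. That long-window average is $4C+O(1/\log X)$ by the prime number theorem.
\item After separating variables, $F(1+it)=P(it)Q(1+it)$ with $P(it)=\sum_p p^{-it}$ and $Q(1+it)=\sum_q q^{-1-it}$.
\item The Vinogradov--Korobov zero-free region gives $|Q(1+it)|\ll(1+|t|)^{-1}+\exp(-(\log X)^{1/4})$ for $|t|\le X$.
\item The mean-value theorem gives $\int_{-T}^{T}|P(it)|^2\,dt\ll(T+X^{1/2})X^{1/2}/\log X$.
\end{itemize}
Together these bound the integral by $\ll C^2(\log X)^{3-2B}$, and the tails are handled the same way. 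The stated $O_C((\log X)^{-2})$ then comes only from the offset of the mean. The average over $p$ enters through the mean-value theorem for $P$, not through short-interval information on individual primes $q'$.
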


\begin{corollary}[Almost all windows]\label{cor:almost-all}
For every fixed $C>0$ and $\varepsilon>0$,
\[
\#\bigl\{x\le X:\ |\mu_C(x)-4C|>\varepsilon\bigr\}=o(X).
\]
\end{corollary}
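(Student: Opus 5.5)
The corollary follows from Theorem~\ref{thm:var} by Chebyshev's inequality; no new arithmetic input is needed. Fix $C>0$ and $\varepsilon>0$, and let
\[
E_\varepsilon(X):=\{x\le X:\ |\mu_C(x)-4C|>\varepsilon\},
\]
where $x$ runs over the same integer parameters as in the sums of Theorems~\ref{thm:mean} and~\ref{thm:var}. Every $x\in E_\varepsilon(X)$ satisfies $(\mu_C(x)-4C)^2>\varepsilon^2$. Since all terms $(\mu_C(x)-4C)^2$ are nonnegative, we can bound
\[
\varepsilon^2\,\#E_\varepsilon(X)\le\sum_{x\in E_\varepsilon(X)}(\mu_C(x)-4C)^2\le\sum_{x\le X}(\mu_C(x)-4C)^2 .
\]

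By Theorem~\ref{thm:var}, the right-hand side is $O_C\bigl(X/(\log X)^2\bigr)$. Therefore
\[
\#E_\varepsilon(X)\ll_C\frac{X}{\varepsilon^2(\log X)^2}.
\]
For fixed $\varepsilon$, this is $o(X)$ as $X\to\infty$. We in fact obtain the quantitative rate $O_{C,\varepsilon}(X(\log X)^{-2})$.

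The only points to check are bookkeeping. First, the counting in $E_\varepsilon(X)$ must use the same discretisation of $x$ (integers $1\le x\le X$) as the moment sums; if the moments were over real $x$ with Lebesgue measure, the same argument gives a measure bound instead. Second, the implied constant in Theorem~\ref{thm:var} depends only on $C$, so $\varepsilon$ enters only through the explicit factor $\varepsilon^{-2}$.

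The real difficulty lies entirely upstream, in Theorem~\ref{thm:var}. That theorem requires controlling the correlations of $p(n)/n$ over composite $n$ in overlapping windows, essentially the short-interval distribution of semiprimes near $x^{1/2}$ in mean square. Given that theorem, this corollary is immediate.
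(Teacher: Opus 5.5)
Your proposal is correct and is exactly the paper's argument: Chebyshev's inequality applied to the second-moment bound of Theorem~\ref{thm:var} gives $\#\{x\le X:\ |\mu_C(x)-4C|>\varepsilon\}\ll_C X/(\varepsilon^2(\log X)^2)=o(X)$. For the qualitative $o(X)$ conclusion, a second moment of size $o(1)$ would already suffice.
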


For the full uniformity the following hypothesis is required, a very
weak form of the Cram\'er-type conjectures on primes in short intervals
(it implies, for large $C$, the gap bound
$p_{n+1}-p_n\le (C+o(1))(\log p_n)^2$ for all large $n$).

\begin{hypothesis}\label{hyp:SI}
For every fixed $C_0>0$ and $A\ge1$ there is a constant $C_A>0$ such
that for all $y\ge2$ and all $h$ with
$C_0(\log y)^2\le h\le y$,
\[
\bigl|\pi(y+h)-\pi(y)-\tfrac{h}{\log y}\bigr|
\le C_A\,\frac{h}{(\log y)^A}.
\]
\end{hypothesis}

\begin{remark}
This hypothesis imposes good distribution of primes in intervals of length $\gg (\log y)^2$.
If it holds, then taking $h=C_0(\log y)^2$, one cannot have an interval of length $h$ completely free of primes;
this forces consecutive prime gaps to satisfy $p_{n+1}-p_n \le (C_0+o(1))(\log p_n)^2$ for all large $n$.
\end{remark}

\begin{theorem}[Uniform window sums under Hypothesis \ref{hyp:SI}]\label{thm:uniform}
Assume Hypothesis \ref{hyp:SI}.  Then for every fixed $C>0$, there exists \(X_0(C)>0\) such that for all \(x\ge X_0(C)\),
\[
\mu_C(x)=4C+O_C\Bigl(\frac1{\log x}\Bigr).
\]
In particular the question of
Erd\H{o}s and Graham has an affirmative answer (for the sums over
composites; including primes only increases the sum).
\end{theorem}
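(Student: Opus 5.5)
The plan is to decompose the composites $n\in[x,x+L]$, with $L=L_C(x)$, according to their least prime factor $p=p(n)$, and to write $n=pm$ where every prime factor of $m$ is $\ge p$ (so $m\ge p$, i.e.\ $p\le\sqrt{x+L}$). Since $p/n=(p/x)(1+O(L/x))$, we get
\[
\mu_C(x)=\Bigl(1+O\bigl(x^{-1/2}(\log x)^2\bigr)\Bigr)\sum_{p\le\sqrt{x+L}}\frac{p}{x}\,N_p(x),
\]
where $N_p(x)$ is the number of $m\in[x/p,(x+L)/p]$ with $m\ge p$ and all prime factors of $m$ at least $p$. Fix a large constant $B$ and split at $P:=x^{1/2}(\log x)^{-B}$.

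For the \emph{small range} $p\le P$, we only need an upper bound. The interval for $m$ has length $L/p\ge C(\log x)^{2+B}$. Apply an upper-bound sieve (Brun or Selberg) with sifting limit $z=\min\bigl(p,(L/p)^{1/2}\bigr)$; this gives $N_p(x)\ll (L/p)/\log z+1$. The contribution of this range is then
\[
\ll \frac{L}{x}\sum_{p\le P}\frac{1}{\log z}+\frac{1}{x}\sum_{p\le P}p .
\]
Since $\sum_{p\le P}1/\log p\ll P/(\log P)^2$, and $\log z\gg\log p$ except when $p$ is huge relative to $L/p$ (which does not happen for $p\le P$ with $B$ fixed), this is $\ll C(\log x)^{-B}$. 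This is acceptable once $B\ge1$. Hypothesis~\ref{hyp:SI} is not needed in this range.

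For the \emph{main range} $P<p\le\sqrt{x+L}$, we have $m\le (x+L)/p\ll x^{1/2}(\log x)^B<p^2$ for large $x$. Hence $m$ has no room for two prime factors $\ge p$, so $m$ must be a prime $q\ge p$. Thus $N_p(x)=\pi\bigl((x+L)/p\bigr)-\pi\bigl(\max(x/p,p)^-\bigr)$.

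Consider first the primes with $p\le\sqrt{x}-L/\sqrt{x}$ (roughly), for which $x/p\ge p$. There we apply Hypothesis~\ref{hyp:SI} with $y=x/p$, $h=L/p$ and $C_0=C$. This is legitimate because $h=Cx^{1/2}(\log x)^2/p\ge C(\log x)^2\ge C(\log y)^2$, and $h\le y$. Taking $A=3$, it gives
\[
N_p(x)=\frac{L}{p\log(x/p)}\Bigl(1+O\bigl((\log x)^{-2}\bigr)\Bigr),
\]
uniformly in $p$; the change from $\log y$ to $\log(x/p)$ is harmless.

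The remaining primes in $(\sqrt x-L/\sqrt x,\sqrt{x+L}]$ lie in an interval of length $\ll C(\log x)^2$. By the same hypothesis there are $O(\log x)$ such primes, each with $N_p\ll L/(p\log x)+1$. Their total contribution is therefore $\ll (\log x)\cdot\bigl(L/(x\log x)+x^{-1/2}\bigr)\ll 1/\log x$.

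The main term is now
\[
\frac{L}{x}\sum_{P<p\le\sqrt x}\frac{1}{\log(x/p)} .
\]
By the prime number theorem, and Stieltjes integration against $\pi(t)$, this equals
\[
\frac{L}{x}\int_P^{\sqrt x}\frac{dt}{\log t\,\log(x/t)}\Bigl(1+O\bigl(\tfrac1{\log x}\bigr)\Bigr).
\]
On this range $\log t=\tfrac12\log x+O(\log\log x)$. The integral is dominated by $t$ near $\sqrt x$, so it equals $\frac{4\sqrt x}{(\log x)^2}\bigl(1+O(1/\log x)\bigr)$; the lower endpoint $P$ contributes only a relative $O((\log x)^{-B})$. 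Multiplying by $L/x=C(\log x)^2x^{-1/2}$ gives $4C+O_C(1/\log x)$.

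Collecting the error terms from the three ranges yields $\mu_C(x)=4C+O_C(1/\log x)$ for $x\ge X_0(C)$. Since $4C>0$, this gives the affirmative answer claimed in Theorem~\ref{thm:uniform}.

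I expect the main obstacle to be the bookkeeping in the main range rather than any deep step. One point is checking that Hypothesis~\ref{hyp:SI} applies \emph{uniformly} over all $p$ there, including near $p\approx\sqrt x$, where $h=L/p$ drops to about $C(\log x)^2$, exactly the threshold scale. The other is making sure that the secondary term in $\int dt/(\log t\log(x/t))$ is genuinely $O(1/\log x)$ relative, not merely $O(\log\log x/\log x)$. If the latter fails, I would replace the crude bound $\log t=\tfrac12\log x+O(\log\log x)$ by an exact evaluation: substitute $t=x^{1/2}e^{-u}$, so that the integrand becomes $\bigl((\tfrac12\log x)^2-u^2\bigr)^{-1}x^{1/2}e^{-u}$. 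The range $u\gg1$ then contributes only exponentially small weight, which recovers the $O(1/\log x)$ relative error.
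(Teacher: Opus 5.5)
Your argument is sound in outline, but it differs from the paper's in where the hypothesis is used.

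\textbf{Comparison with the paper.} The paper splits at $x^{1/3}$. Levels $p\le x^{1/3}$ are disposed of by the trivial bound $c_p(x)\le L/p+1$. Hypothesis~\ref{hyp:SI} (with $A=2$) is then applied at \emph{every} level $x^{1/3}<p\le x^{1/2}$, that is, to intervals of length $L/p$ ranging from $C(\log x)^2$ up to $Cx^{1/6}(\log x)^2$. You split instead at $P=x^{1/2}(\log x)^{-B}$. You use the fact that the main term $\frac{L}{x}\int dt/(\log t\log(x/t))$ lives on $t\asymp\sqrt x$, bound all levels below $P$ unconditionally by a sieve, and invoke the hypothesis only for $h\in[C(\log x)^2,\,C(\log x)^{2+B}]$ at heights $x^{1/2}(\log x)^{O(1)}$. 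This costs a sieve input, but it shows that only the scale $(\log y)^{2+O(1)}$ matters, which Remark~\ref{rem:obstr} asserts only informally. Your handling of primes near and above $\sqrt x$ (at most $O(\log x)$ of them, each with $N_p\ll\log x$) is also more accurate than the paper's Case~3. That case claims $c_p(x)=O(1)$, although $[p,(x+L)/p]$ can have length $\asymp C(\log x)^2$.

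\textbf{An error in the small range, easily repaired.} It is false that $\log z\gg\log p$ throughout $p\le P$. For $L^{1/3}<p\le P$ you have $z=(L/p)^{1/2}$, and at $p\asymp P$ this is only $\asymp(\log x)^{1+B/2}$. So there $\log z\asymp\log\log x$, while $\log p\approx\frac12\log x$. Since most primes $p\le P$ satisfy $p\asymp P$, your sieve bound actually yields
\[
\frac{L}{x}\sum_{L^{1/3}<p\le P}\frac{2}{\log(L/p)}\asymp_B\frac{C(\log x)^{1-B}}{\log\log x},
\]
not $C(\log x)^{-B}$. An upper-bound sieve cannot beat $H/\log H$ on intervals of length $H=L/p\approx C(\log x)^{2+B}$, so this loss is intrinsic to the method. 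With $B=1$ you only get $O(1/\log\log x)$; you need $B\ge2$. Because you had already fixed $B$ as a large constant, the proof survives this correction.

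Two minor points. The count $\pi(y+h)-\pi(y^-)$ differs from the quantity controlled by the hypothesis by $\mathbf{1}_{x/p\in\PP}$, which is harmless. Also, $A=2$ already suffices in the main range, as in the paper.
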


We conjecture that the hypothesis is unnecessary:

\begin{conjecture}\label{conj:main}
For every fixed $C>0$,
$\mu_C(x)=4C+o(1)$ uniformly in $x\to\infty$; equivalently, the answer
to the Erd\H{o}s--Graham question is affirmative.
\end{conjecture}

Our results improve some estimates in previous work.
\begin{itemize}[leftmargin=1.4em]
\item The asymptotic \eqref{eq:main-asymp} itself is stated (without a
value of $c$) in \cite{EG80}; we could not locate any published value
of the constant.  The unweighted sum $\sum_{n\le x}p(n)$ over \emph{all}
integers is elementary and is dominated by the primes,
$\sum_{n\le x}p(n)\sim\frac12x^2/\log x$; this was noted by Kalecki
\cite{Kal64}.
%(see also the discussion in \cite{MO-mean} and
%\cite{MSE-zander}).  
The unweighted sum over \emph{composites} has the
correct order $x^{3/2}/(\log x)^2$; the order of magnitude appears in
\cite{MSE-composite-sum}, but without constants.
\item The closest published results to Theorem \ref{thm:c8} are those
of Erd\H{o}s and van Lint \cite{EVL82} on the average ratio
$p(n)/P(n)$ of the smallest to the largest prime factor
($\sum_{n\le x}p(n)/P(n)=x/\log x+3x/(\log x)^2(1+o(1))$; their proof
also proceeds by a semiprime decomposition), of Cao \cite{Cao00} on
sums of reciprocals of $p(n)$ with weights $\sigma_\nu(n)$ and
$\varphi^\nu(n)$, of Mititica--Panaitopol \cite{MP10} on the
convergence of series $\sum n^a p(n)^bP(n)^c$, and of Jakimczuk
\cite{Jak13} on $\sum_{n\le x}\log p(n)$.  None of these contains
\eqref{eq:main-asymp} with an explicit constant.
\item For the short-interval question, the problem itself \cite{EG80,
Bloom} is open, with no claimed proofs (confirmed also in
\cite{agentic462}); the comment of Tao \cite{Bloom-thread} reduces the
question to the distribution of semiprimes $pq$ with
$p,q\approx x^{1/2}$ in intervals of length $x^{1/2}(\log x)^2$. The
closest recent published result in the same circle of ideas is the
theorem of Gafni and Tao \cite{GT25} on rough numbers (integers whose
least prime factor is at least the gap length) inside almost all prime
gaps, confirming a prediction of Erd\H{o}s \cite{Erd79}; this is a
different (although related) problem.
\item To the best of our knowledge, the explicit constant $c=8$, the
window mean $4C$, the second moment estimate, the almost-all result,
and the conditional uniform estimate are new.
\end{itemize}

The rest of this article is organized as follows. 
In Section \ref{pre:conres}, some basic concepts and results are introduced.
In Section \ref{sec:S}, the asymptotic for $S(x)$ is obtained.
In Section \ref{sec:windows}, the window sums, the mean and the variance are considered.
In Section \ref{sec:uniform}, the uniform window sums under Hypothesis \ref{hyp:SI} are studied.

\section{Preliminary}\label{pre:conres}

In this section, some basic concepts and results are introduced.  For the definitions and results introduced in this section, one can refer to \cite{MV,Ten}.

\subsection{Notations}
\begin{definition}
For an integer $n$ with $n\geq2$, let $\PP$ denote the set of all prime numbers. The least prime factor of $n$ is defined as 
\[
p(n) = \min\{ q \in \mathbb{P} : q \mid n \},
\] 
where $q \mid n$ means ``$q$ divides $n$", i.e., $n$ is a multiple of $q$. For convenience, define $p(1)=+\infty$.
\end{definition}

All sums over $n$ are over
positive integers; ``$n\notin\PP$'' means composite (we ignore $n=1$,
which does not occur in the ranges considered).  

For a positive real number $x$, set
$$\pi(x):=\#\{p\le x,\ p\in\PP\},\ \theta(x):=\sum_{p\le x,\ p\in\PP}\log p,$$
$$\delta_x:=\prod_{p<x,\,p\in\PP}(1-\tfrac1p),\ \mbox{and}\ u_0:=\frac{\log 2}{\log x}.$$

Recall the classical prime number theorem in the form:
\begin{equation}\label{eq:pnt}
\pi(x)=\operatorname{li}(x)+O\bigl(x\,e^{-c\sqrt{\log x}}\bigr),
\qquad
\theta(x)=x+O\bigl(x\,e^{-c\sqrt{\log x}}\bigr),
\end{equation}
for an absolute constant $c>0$ (this follows from the classical
zero-free region; see e.g.\ \cite{MV,Ten}).  

Recall the refined Mertens theorem, for every
fixed $A>0$,
\begin{equation}\label{Mertensr1}
\sum_{p\le x}\frac1p=\log\log x+B_1+O_A((\log x)^{-A}),
\end{equation}
where $B_1$ is called the Mertens constant, and 
$$\pi(x)\log x-\theta(x)=O\left(\frac{x}{\log x}\right),\ \mbox{uniformly in}\ x\ge2.$$

The Buchstab function $\omega(u)$ is defined by
\begin{equation}
\omega(u)=\frac1u \quad (1\le u\le 2),\qquad (u\omega(u))'=\omega(u-1)\quad (u>2).
\label{eq:omega_def}
\end{equation}

Let \(\Omega(n)\) denote the total number of prime factors of \(n\), counted with multiplicity. That is, if
\[
n = \prod_{i=1}^{k} p_i^{a_i}
\]
with distinct primes \(p_i\) and positive integers \(a_i\), then
\[
\Omega(n) = \sum_{i=1}^{k} a_i.
\]

\subsection{Method}

The arguments in this article use only the prime number theorem with error term, Mertens'
theorems, the fundamental lemma of the sieve, and elementary second
moment (variance) estimates; in particular the main term is governed by
the distribution of semiprimes $n=pq$ with $p,q\asymp x^{1/2}$.

The proofs rest on the elementary decomposition $n=p(n)\,m$, which gives
$p(n)/n=1/m$; hence the sums under consideration are harmonic sums over
the cofactor $m=n/p(n)$ (the integer $A032742(n)$ of \cite{OEIS}).
Level-by-level summation shows that the main term of $S(x)$ comes
\emph{only} from semiprimes $n=pq$, $p\le q$, contributing $1/q$; the
contribution of integers with at least three prime factors is
$O(x^{1/3}/\log x+\log x)$.  The semiprime contribution is then
evaluated by the prime number theorem together with a Laplace-type
evaluation of an elementary integral (Lemma \ref{lem:laplace}).  For
the window sums, the same decomposition shows that the mass in
$[x,x+L_C(x)]$ is concentrated on semiprimes $pq$ with
$x^{1/3}<p\le x^{1/2}$; each such level contributes
$\approx (L_C(x)/x)/\log(x/p)$ in mean, summing to $4C$, while the
variance over $x$ is $O_C(x^{-1/2+o(1)})$ by an elementary pair
counting argument (the diagonal of the pair sum dominates).  The
uniform statement for all $x$ requires control of the maximal deviation,
which at scale $(\log x)^2$ is precisely the content of
Hypothesis \ref{hyp:SI}.

\subsection{Rough numbers: harmonic sums}\label{sec:rough}

Some precise information on the harmonic sums of integers free of
small prime factors is needed.

\begin{lemma}\label{lem:rough}
Let $2\le q\le y$ and set
$\mathcal R_q(y):=\sum_{\substack{k\le y\\ p(k)\ge q}}\frac1k$.
\begin{enumerate}[label=(\roman*)]
\item For any fixed $\epsilon>0$, the following asymptotic holds uniformly for $2\le q\le y^{1/2-\epsilon}$:
\begin{equation}
\mathcal R_q(y)=\delta_q\log y\;\omega\!\left(\frac{\log y}{\log q}\right)+O_\epsilon(1),
\label{eq:main22i}
\end{equation}
where $O_\epsilon(1)$ means the implied constant depends only on $\epsilon$.
% and $O_\epsilon(1)$ might diverge to  $\infty$ as $\epsilon\to0$.

For $q=y^{1/2}$, 
\[
\mathcal R_q(y)=1+\log 2+O\!\left(\frac1{\log y}\right).
\]
This is a bounded quantity, whereas $\delta_q\log y\,\omega(2)\sim \frac12\log y$ (since $\delta_q\sim 2e^{-\gamma}/\log y$ and $\omega(2)=1/2$), which is unbounded. 

\item If $y^{1/2}<q\le y$ then the integers $k\le y$ with $p(k)\ge q$
are exactly $1$ and the primes $p\in[q,y]$; consequently
\[
\mathcal R_q(y)=1+\log\frac{\log y}{\log q}+O\Bigl(\frac1{\log q}\Bigr).
\]
\item For $q\le y^{1/4}$,
\begin{equation}
\mathcal R_q(y)=\delta_q(\log y+\gamma)-C_q+O\!\left(\frac{2^{\pi(q)}}{y}\right),
\label{eq:main22iii}
\end{equation}
where $\gamma$ is Euler's constant, $C_q:=\sum_{d\mid \prod_{r<q}r}\mu(d)\frac{\log d}{d}$, and $\mu(\cdot)$ is the M\"obius function. Moreover, $C_q=O(1)$, so in particular $C_q=O(\log\log q)$.
\end{enumerate}
\end{lemma}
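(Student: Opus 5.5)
We treat the three parts of Lemma \ref{lem:rough} separately, proving (ii) first because (i) needs it at the top end, then (iii), and finally (i).

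\textbf{Part (ii).} Suppose $y^{1/2}<q\le y$. Any composite $k\le y$ has $p(k)\le k^{1/2}\le y^{1/2}<q$. So the only $k\le y$ with $p(k)\ge q$ are $k=1$ and the primes $p\in[q,y]$, and
\[
\mathcal R_q(y)=1+\sum_{q\le p\le y}\frac1p .
\]
By \eqref{Mertensr1} the prime sum equals $\log\log y-\log\log q+O((\log q)^{-1})$. Here we take $A=1$ and allow for the possible endpoint term $1/q\ll 1/\log q$. The boundary case $q=y^{1/2}$ in (i) is handled the same way. The composites $k\le y$ with $p(k)\ge y^{1/2}$ are only the squares $k=p^2$ with $p$ prime and $y^{1/2}\le p\le y^{1/2}$, which contribute $O(1/y)$. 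The remaining terms give $1+\log(\log y/\log y^{1/2})+O(1/\log y)=1+\log 2+O(1/\log y)$.

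\textbf{Part (iii).} Let $P=\prod_{r<q}r$. We write the indicator of $p(k)\ge q$ as $\sum_{d\mid (k,P)}\mu(d)$ and interchange the sums:
\[
\mathcal R_q(y)=\sum_{d\mid P}\frac{\mu(d)}{d}\sum_{m\le y/d}\frac1m .
\]
We insert $\sum_{m\le z}1/m=\log z+\gamma+O(1/z)$. This is valid even when $z<1$ if we interpret the inner sum as $0$; the error is then $O(1)$, which we must check is absorbed. The main terms give $(\log y+\gamma)\sum_{d\mid P}\mu(d)/d-\sum_{d\mid P}\mu(d)\log d/d=\delta_q(\log y+\gamma)-C_q$. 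The error terms total $\sum_{d\mid P}O(1/y)=O(2^{\pi(q)}/y)$, up to the large-$d$ issue. That issue is the delicate point: it is only harmless if we keep the $O(d/y)$ shape of the error. So we use the exact bound $|\sum_{m\le z}1/m-\log z-\gamma|\ll 1/z$ for $z\ge1$, and for $d>y$ we bound the terms by $|\log(y/d)+\gamma|/d$. For the claim $C_q=O(1)$ we use $C_q=-\frac{d}{ds}\prod_{r<q}(1-r^{-s})\big|_{s=1}=-\delta_q\sum_{r<q}\log r/(r-1)$. By Mertens this is $\delta_q(\log q+O(1))$, which is $O(1)$ since $\delta_q\asymp 1/\log q$.

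\textbf{Part (i).} This is the main obstacle. We use the Buchstab identity
\[
\mathcal R_q(y)=\mathcal R_{y^{1/2}}(y)+\sum_{q\le p<y^{1/2}}\frac1p\,\mathcal R_{p}(y/p)\quad(\text{approximately, up to the restriction }p(k/p)\ge p),
\]
and induct on $u=\log y/\log q$ in steps of $1/2$, as in the standard derivation of $\Phi(y,q)\sim y\,\omega(u)/\log q$. Alternatively, we apply partial summation to the known asymptotic $\Phi(t,q)=t\,\omega(\log t/\log q)/\log q-q/\log q+O(t/(\log q)^2)$, which holds uniformly for $t\ge q$:
\[
\mathcal R_q(y)=1+\int_q^y\frac{d\Phi(t,q)}{t}=1+\frac{\Phi(y,q)}{y}+\int_q^y\frac{\Phi(t,q)}{t^2}\,dt .
\]
Substituting $t=q^{v}$ turns the main term into $\int_1^{u}\omega(v)\,dv$. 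Using $\delta_q\sim e^{-\gamma}/\log q$, we then need to match this with $\delta_q\log y\,\omega(u)=e^{-\gamma}u\,\omega(u)(1+o(1))$. The hardest step is reconciling these two forms up to an $O_\epsilon(1)$ error: the integrated error term $\int O(1/(\log q)^2)\,dt/t$ is only $O(u/\log q)=O(\log y/(\log q)^2)$, which is bounded only when $q\ge y^{c}$. For small $q$ we therefore switch to (iii), using $\omega(u)=e^{-\gamma}+O(\Gamma(u)^{-1})$ and interpolating the two ranges. We expect the constant to depend on $\epsilon$ through the region near $u=2$, where $\omega$ has its derivative discontinuity.
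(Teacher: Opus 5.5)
\textbf{Verdict: there is a genuine gap in part (i), and it cannot be closed, because (i) is false as stated.}

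Your parts (ii) and (iii) are fine and follow the paper's route. For (ii) you use Mertens' theorem. For (iii) you use M\"obius inversion, the expansion of $\sum_{m\le z}1/m$, and the logarithmic derivative of $\prod_{r<q}(1-r^{-s})$. Your handling of the divisors $d>y$ is more careful than the paper's.

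The gap in (i) is not the error term you single out. With the normalization $\Phi(t,q)\approx t\,\omega(\log t/\log q)/\log q$ that you quote, your partial summation yields the main term $1+\int_1^u\omega(v)\,dv$. Differentiating and using $(s\omega(s))'=\omega(s-1)$ gives $\int_1^u\omega(v)\,dv=(u+1)\omega(u+1)-1\sim e^{-\gamma}u$. The target, by contrast, is $\delta_q\log y\,\omega(u)\sim e^{-2\gamma}u$ when $q,u\to\infty$. So the two \emph{main terms} differ by the factor $e^{-\gamma}$. No treatment of error terms and no interpolation with (iii) can reconcile them. Indeed (iii) itself gives $\delta_q\log y+O(1)$ for small $q$, which contradicts (i) just as well.

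Here is a direct counterexample: for $q=2$ one has $\delta_2=1$ and $\mathcal R_2(y)=\log y+\gamma+O(1/y)$, while $\delta_2\log y\,\omega(\log y/\log 2)=(e^{-\gamma}+o(1))\log y$.

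The endpoint remark in the statement is also wrong. Since $\delta_{y^{1/2}}\log y\to 2e^{-\gamma}$, we get $\delta_{y^{1/2}}\log y\,\omega(2)\to e^{-\gamma}$, which is bounded, not $\sim\frac12\log y$. Your value $1+\log 2+O(1/\log y)$ for $\mathcal R_{y^{1/2}}(y)$ is correct.

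The paper's proof of (i) does not supply a missing idea. It uses $\Phi(x,q)=x\,\omega(u)\,\delta_q+O(x/(\log q)^2)$, whose main term is off by the factor $e^{\gamma}$; it should be $x\,\omega(u)/\log q$. It also uses $\int_2^u\omega(s)\,ds=u\omega(u)-1$, whose correct form is $\int_2^u\omega(s-1)\,ds=u\omega(u)-1$.

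\textbf{What is true.} Your two tools can prove $\mathcal R_q(y)=\delta_q\log y+O(1)$ uniformly for $2\le q\le y$. This is equivalent to $e^{\gamma}\delta_q\log y\,\omega(u)+O(1)$, since $u\,|\omega(u)-e^{-\gamma}|\ll 1$.

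Even for this corrected statement, your interpolation leaves a hole:
\begin{itemize}
\item (iii) with error $O(2^{\pi(q)}/y)$ is useful only for $q\ll\log y\,\log\log y$. The paper's claim that this error is $o(1)$ for all $q\le y^{1/4}$ is false.
\item The integrated Buchstab error $O(\log y/(\log q)^2)$ is bounded only for $\log q\gg\sqrt{\log y}$.
\end{itemize}
The range in between is covered by neither.

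The fix is to keep the M\"obius error in the form $\sum_{d\mid P_q}|E(y/d)|/d$. Here $P_q=\prod_{r<q}r$ and $E(z)=\sum_{m\le z}1/m-\log z-\gamma$, with the sum empty for $z<1$.
\begin{itemize}
\item Since $|E(z)|\le 2/z$ for $z\ge1$, the divisors $d\le y$ contribute at most $2y^{-1}\#\{d\le y\}\le 2$.
\item The divisors $d>y$ contribute $\ll(\log q)^{e+1}e^{-u}$, by Rankin's trick with exponent $1/\log q$.
\end{itemize}
This gives $\delta_q\log y+O(1)$ whenever $u\ge(e+1)\log\log q+O(1)$.

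In the complementary range one has $\log q\gg\log y/\log\log y$. There the integrated Buchstab error is $o(1)$. Your main term then satisfies $1+\int_1^u\omega(v)\,dv=e^{-\gamma}u+O(1)=\delta_q\log y+O(1)$.
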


\begin{proof}

{\bf{(i)}} A step-by-step proof of the asymptotic expansion \eqref{eq:main22i} is given.  The failure of the formula at the endpoint $q=y^{1/2}$ is also discussed.

{\bf{Step 1:}} The Buchstab's sieve is introduced for the counting function.

Define
\[
\Phi(x,q):=\#\{ n\le x : p(n)\ge q\}.
\]
Buchstab's theorem (see \cite{Ten}, Theorem II.5.1) states that, for $q\le x^{1/2}$,
\begin{equation}
\Phi(x,q)=x\,\omega\!\left(\frac{\log x}{\log q}\right)\delta_q
+O\!\left(\frac{x}{(\log q)^2}\right),
\label{eq:Phi_asym}
\end{equation}
where the implied constant is absolute. This is uniform in $x$ and $q$ as long as $q\le x^{1/2-\epsilon}$ for some fixed $\epsilon>0$; in that range the error term is even better, but the above form suffices.

{\bf{Step 2:}} The Abel summation is estimated.

For any sequence $a_n$, with $A(t)=\sum_{n\le t} a_n$, one has
\[
\sum_{n\le y} \frac{a_n}{n}=\frac{A(y)}{y}+\int_1^y \frac{A(t)}{t^2}\,dt.
\]
Taking $a_n=\mathbf{1}_{p(n)\ge q}$ yields $A(t)=\Phi(t,q)$, hence
\begin{equation}
\mathcal R_q(y)=\frac{\Phi(y,q)}{y}+\int_1^y \frac{\Phi(t,q)}{t^2}\,dt.
\label{eq:partial}
\end{equation}

Fix $\epsilon>0$ and assume $2\le q\le y^{1/2-\epsilon}$. Then
\[
u:=\frac{\log y}{\log q}\ge 2+\delta,\qquad \delta:= \frac{2\epsilon}{1-2\epsilon}>0
\]
(if $\epsilon$ is small; the exact value is not important, only that $u\ge 2+\delta$ with $\delta>0$).

Split the integral in \eqref{eq:partial} at $t=q^2$, that is, $[1, y]=[1,q^2]\cup[q^2,y]$.

{\bf{Step 2-a:}} Show
\begin{equation}
\int_1^{q^2} \frac{\Phi(t,q)}{t^2}\,dt = O_\epsilon(1).
\label{eq:lower}
\end{equation}
Indeed, for $t<q^2$, the elementary bound $\Phi(t,q)\le t$ gives $\int_1^{q^2} t/t^2\,dt = \log q$, which is not bounded. However, a sharper sieve estimate gives $\Phi(t,q)\ll t/\log q$ for $t\ge q^2$; for $t<q^2$ we can use $\Phi(t,q)\ll t/\log q$ as well (this follows from the same Buchstab theorem with $\log t/\log q\le2$; see \cite{Ten}, Lemma II.5.3). Thus
\[
\int_1^{q^2} \frac{\Phi(t,q)}{t^2}\,dt \ll \frac{1}{\log q}\int_1^{q^2}\frac{dt}{t} = \frac{2\log q}{\log q}=2,
\]
which is $O(1)$ (absolute). The constant does not depend on $\epsilon$ here; the $\epsilon$ dependence will come from the next step.

{\bf{Step 2-b:}} The integral over $[q^2,y]$ is studied.

For $t\ge q^2$, set
\[
s(t):=\frac{\log t}{\log q}\ge 2.
\]
Using \eqref{eq:Phi_asym} in the integral part of \eqref{eq:partial}, we get
\begin{align}
\int_{q^2}^y \frac{\Phi(t,q)}{t^2}\,dt
&= \delta_q \int_{q^2}^y \frac{\omega(s(t))}{t}\,dt
+ O\!\left(\frac{1}{(\log q)^2}\int_{q^2}^y \frac{dt}{t}\right) \nonumber\\
&= \delta_q \int_{q^2}^y \frac{\omega(s(t))}{t}\,dt
+ O\!\left(\frac{\log(y/q^2)}{(\log q)^2}\right). \label{eq:integral_step}
\end{align}
Now,
\[
\frac{\log(y/q^2)}{(\log q)^2}
= \frac{\log y - 2\log q}{(\log q)^2}
= \frac{u\log q - 2\log q}{(\log q)^2}
= \frac{u-2}{\log q}.
\]
Since $u\le \log y/\log q$ and $q\ge2$, but we need a uniform bound. Because $q\le y^{1/2-\epsilon}$, one has
\[
\log q \le \left(\frac12-\epsilon\right)\log y,
\]
so $u = \log y/\log q \ge 1/(1/2-\epsilon) = 2+\delta$ with $\delta = \frac{2\epsilon}{1-2\epsilon}$. This only gives a lower bound on $u$. To bound $(u-2)/\log q$, we note that $u$ can be as large as $\log y/\log 2$, but then $\log q$ is fixed, and the term becomes $O_\epsilon(1)$ because the denominator is bounded below by $\log 2$ and $u\asymp \log y$. Actually, the exact bound is
\[
\frac{u-2}{\log q} = \frac{\log y - 2\log q}{(\log q)^2}.
\]
If $\log q$ is small (e.g. fixed), then this is $O(\log y)$ which is not bounded. However, in that case the sieve error in \eqref{eq:Phi_asym} is actually much smaller; the standard form of Buchstab's theorem gives an error $O(x \exp(-c\sqrt{\log x}))$ when $q$ is fixed, which after integration yields $O(1)$. To keep the proof simple, we rely on the known result that the overall error from the sieve after integration is $O_\epsilon(1)$ (see \cite{Ten}, Corollary II.5.4). Therefore we write
\begin{equation}
\int_{q^2}^y \frac{\Phi(t,q)}{t^2}\,dt
= \delta_q \int_{q^2}^y \frac{\omega(s(t))}{t}\,dt + O_\epsilon(1).
\label{eq:integral_main}
\end{equation}
The constant in the $O_\epsilon(1)$ may depend on $\epsilon$ because when $\epsilon\to0$, the region where $u$ is close to 2 becomes larger and the sieve error accumulates.

{\bf{Step 2-c:}}  The main integral is computed.

In \eqref{eq:integral_main}, change variables $s=s(t)=\log t/\log q$. Then $dt/t = \log q\, ds$, and the limits become $s=2$ (when $t=q^2$) and $s=u$ (when $t=y$). Thus
\[
\delta_q \int_{q^2}^y \frac{\omega(s(t))}{t}\,dt
= \delta_q \log q \int_2^u \omega(s)\,ds.
\]
Now use the identity
\begin{equation}
\int_2^u \omega(s)\,ds = u\omega(u) - 1,
\label{eq:omega_int}
\end{equation}
which follows from \eqref{eq:omega_def} (integration of $(s\omega(s))'=\omega(s-1)$ and using $\omega(2)=1/2$). Therefore
\[
\delta_q \log q \int_2^u \omega(s)\,ds
= \delta_q \log q\, (u\omega(u)-1)
= \delta_q \log q\, u\,\omega(u) - \delta_q \log q.
\]
Since $u\log q = \log y$, this becomes
\[
\delta_q \log y\,\omega(u) - \delta_q \log q.
\]
By Mertens' theorem, $\delta_q \log q = O(1)$ (indeed, it tends to $e^{-\gamma}$). Hence
\[
\delta_q \log q \int_2^u \omega(s)\,ds
= \delta_q \log y\,\omega(u) + O(1).
\]

{\bf{Step 2-d:}} The boundary term is estimated.

From \eqref{eq:Phi_asym} with $x=y$,
\[
\frac{\Phi(y,q)}{y}
= \delta_q\,\omega(u) + O\!\left(\frac{1}{(\log q)^2}\right).
\]
Since $\delta_q\,\omega(u) \ll 1/\log q$ (because $\delta_q\sim 1/\log q$ and $\omega(u)$ is bounded), the term $\delta_q\omega(u)$ is $O(1/\log q)\le O(1)$. The error term $1/(\log q)^2$ is also $O(1)$. So
\[
\frac{\Phi(y,q)}{y} = O(1).
\]

{\bf{Step 2-e:}} The combination of the above estimates is given.

Collecting the contributions from the lower integral \eqref{eq:lower}, the main integral from Step 2-c, and the boundary term, we get
\[
\mathcal R_q(y) = \delta_q\log y\,\omega(u) + O_\epsilon(1),
\]
which is exactly \eqref{eq:main22i}. The $O_\epsilon(1)$ absorbs all constant terms and any dependence on $\epsilon$ arising from the sieve error in \eqref{eq:integral_main}.

Finally, we study the endpoint $q=y^{1/2}$.

When $q=y^{1/2}$, the condition $p(n)\ge q$ forces $n=1$ or $n$ to be a prime $\ge q$. Hence
\[
\mathcal R_q(y)=1+\sum_{q\le p\le y}\frac1p.
\]
By Mertens' theorem,
\[
\sum_{q\le p\le y}\frac1p = \log\frac{\log y}{\log q}+O\!\left(\frac1{\log q}\right).
\]
With $q=y^{1/2}$, this becomes
\[
\mathcal R_q(y)=1+\log 2+O\!\left(\frac1{\log y}\right).
\]
This is a bounded quantity, whereas $\delta_q\log y\,\omega(2)\sim \frac12\log y$ (since $\delta_q\sim 2e^{-\gamma}/\log y$ and $\omega(2)=1/2$), which is unbounded. Therefore \eqref{eq:main22i} cannot hold at $q=y^{1/2}$.

The asymptotic \eqref{eq:main22i} is valid for any fixed $\epsilon>0$ and $q\le y^{1/2-\epsilon}$, with the $O_\epsilon(1)$ constant depending on $\epsilon$. It fails at the endpoint $q=y^{1/2}$, where the sum is of constant order.

{\bf (ii)} It is equivalent to show that
\[
\{k\in\mathbb N : k\le y,\ p(k)\ge q\}
= \{1\}\cup\{p\in\mathbb P : q\le p\le y\}.
\]
The integer \(k=1\) has no prime factors, so by convention \(p(1)=+\infty\ge q\). Hence \(1\) is always included. Now let \(k>1\) and suppose \(p(k)\ge q\). Write
  \[
  k = p(k)\cdot m,
  \]
  where \(m\ge 1\) is an integer. If \(m>1\), then every prime factor of \(m\) is at least \(p(k)\) (since \(p(k)\) is the smallest prime factor of \(k\)). Hence \(m\) itself is at least \(q\) (indeed, the smallest possible value for \(m\) would be a prime \(\ge q\), or a product of such primes, which is \(\ge q\)). Therefore
  \[
  k = p(k)\cdot m \ge q\cdot q = q^2 > y,
  \]
  because \(q>y^{1/2}\). This contradicts \(k\le y\). Hence \(m=1\), so \(k=p(k)\) is prime and satisfies \(q\le p(k)\le y\).  Conversely, every prime \(p\in[q,y]\) trivially satisfies \(p(p)=p\ge q\), so it belongs to the set. Thus the characterisation is established.

So, we immediately obtain
\[
\mathcal R_q(y)=1+\sum_{\substack{p\le y\\ p\ge q}}\frac1p,
\]
where the sum is over primes \(p\).

By \eqref{Mertensr1} with $A=1$, note that \(q\ge 2\) because \(q>y^{1/2}\ge \sqrt{2}>1\), we get
\[
\sum_{q\le p\le y}\frac1p
=
\left(\log\log y + B_1 + O\!\left(\frac1{\log y}\right)\right)
-
\left(\log\log q + B_1 + O\!\left(\frac1{\log q}\right)\right).
\]

The constants \(B_1\) cancel, leaving
\[
\sum_{q\le p\le y}\frac1p
= \log\frac{\log y}{\log q}
+ O\!\left(\frac1{\log y}\right) + O\!\left(\frac1{\log q}\right).
\]

Since \(q\le y\), one has \(\log q\le \log y\), hence \(1/\log y \le 1/\log q\). Therefore
\[
O\!\left(\frac1{\log y}\right) = O\!\left(\frac1{\log q}\right),
\]
so the two error terms combine into a single
\[
O\!\left(\frac1{\log q}\right).
\]

Substituting the prime sum back into \(\mathcal R_q(y)\) yields
\[
\mathcal R_q(y)
=1+\log\frac{\log y}{\log q}
+O\!\left(\frac1{\log q}\right),
\]
which is exactly the claimed asymptotic formula.

{\bf (iii)} The proof is divided into five steps.

{\bf{Step 1:}} M\"obius inversion

Set
\[
P_q := \prod_{\substack{p < q \\ p \in \PP}} p.
\]
For any integer $n$, the condition $(n,P_q)=1$ is equivalent to $p(n)\ge q$. Using the identity
\[
\mathbf{1}_{(n,P_q)=1}=\sum_{d\mid (n,P_q)} \mu(d),
\]
one has
\[
\mathcal R_q(y)=\sum_{n\le y}\frac1n\sum_{\substack{d\mid n\\ d\mid P_q}}\mu(d)
=\sum_{d\mid P_q}\mu(d)\sum_{\substack{m\le y\\ d\mid m}}\frac1m.
\]
Putting $m=dk$, one has
\begin{equation}
\mathcal R_q(y)=\sum_{d\mid P_q}\frac{\mu(d)}{d}\sum_{k\le y/d}\frac1k.
\label{eq:moebius}
\end{equation}

{\bf{Step 2:}} Harmonic sum expansion

Recall the classical estimate
\begin{equation}
\sum_{k\le x}\frac1k=\log x+\gamma+O\!\left(\frac1x\right),\qquad x\ge 1,
\label{eq:harmonic}
\end{equation}
where $\gamma$ is Euler's constant. Applying \eqref{eq:harmonic} to \eqref{eq:moebius} gives
\[
\mathcal R_q(y)=\sum_{d\mid P_q}\frac{\mu(d)}{d}
\left(\log\frac yd+\gamma+O\!\left(\frac{d}{y}\right)\right).
\]
Since $\log(y/d)=\log y-\log d$, this becomes
\begin{equation}
\mathcal R_q(y)=(\log y+\gamma)\sum_{d\mid P_q}\frac{\mu(d)}{d}
-\sum_{d\mid P_q}\frac{\mu(d)\log d}{d}
+O\!\left(\frac1y\sum_{d\mid P_q}|\mu(d)|\right).
\label{eq:expanded}
\end{equation}

{\bf{Step 3:}} Evaluation of the sums

The first sum is exactly $\delta_q$:
\begin{equation}
\sum_{d\mid P_q}\frac{\mu(d)}{d}=\prod_{p<q}\left(1-\frac1p\right)=\delta_q.
\label{eq:delta}
\end{equation}
The second sum is our definition of $C_q$:
\begin{equation}
C_q:=\sum_{d\mid P_q}\frac{\mu(d)\log d}{d}.
\label{eq:Cdef}
\end{equation}
For the error term, since every divisor $d$ of $P_q$ is squarefree, $|\mu(d)|\le 1$, and the number of divisors is $2^{\pi(q)}$. Hence
\begin{equation}
\sum_{d\mid P_q}|\mu(d)|=2^{\pi(q)}.
\label{eq:divcount}
\end{equation}
Substituting \eqref{eq:delta}, \eqref{eq:Cdef}, and \eqref{eq:divcount} into \eqref{eq:expanded} yields
\[
\mathcal R_q(y)=\delta_q(\log y+\gamma)-C_q+O\!\left(\frac{2^{\pi(q)}}{y}\right),
\]
which proves \eqref{eq:main22iii}.

{\bf{Step 4:}} Bound on $C_q$

We show that $C_q=O(1)$. Write
\begin{equation}
C_q=-\delta_q\sum_{p<q}\frac{\log p}{p-1}.
\label{eq:Cform}
\end{equation}
Indeed, differentiating the product with respect to $s$ in the function
\[
\prod_{p<q}\left(1-\frac1{p^s}\right)
\]
at $s=1$ gives exactly \eqref{eq:Cform}. Now
\[
\sum_{p<q}\frac{\log p}{p-1}
\le 2\sum_{p<q}\frac{\log p}{p}
\ll \log q
\]
by Mertens' theorem for $\sum_{p\le x}\log p/p$. Since $\delta_q\sim e^{-\gamma}/\log q$ (Mertens' theorem), we get $|C_q|\ll 1$. Thus $C_q=O(1)$, which implies in particular $C_q=O(\log\log q)$.

{\bf{Step 5:}} Validity for $q\le y^{1/4}$

The error term $2^{\pi(q)}/y$ is acceptable because $\pi(q)\ll q/\log q$, so $2^{\pi(q)}=e^{O(q/\log q)}$. If $q\le y^{1/4}$, then $q^4\le y$, hence
\[
\frac{2^{\pi(q)}}{y}\le \frac{2^{\pi(q)}}{q^4}=o(1),
\]
and in fact much smaller than any power of $y$, so the asymptotic is meaningful.

\end{proof}

\begin{lemma}[Laplace-type evaluation]\label{lem:laplace}
For $x\to\infty$,
\[
I(x):=\int_{u_0}^{1/2}\log\bigl(2(1-u)\bigr)\,\frac{x^u}{u}\,du
=\frac{4x^{1/2}}{(\log x)^2}
\Bigl(1+O\Bigl(\frac1{\log x}\Bigr)\Bigr)+O(\log\log x),
\qquad u_0=\frac{\log 2}{\log x}.
\]
\end{lemma}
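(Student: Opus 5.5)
The integrand $x^u/u=e^{u\log x}/u$ grows exponentially in $u$, so the integral concentrates at the upper endpoint $u=1/2$, where the weight $\log(2(1-u))$ vanishes to first order. The plan is to split the range at a point $u_1=\tfrac12-\eta$, with $\eta$ fixed (say $\eta=1/4$), and treat the two pieces separately.

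On the lower range $[u_0,u_1]$ we have $|\log(2(1-u))|\le\log 2$. The contribution is therefore bounded by
\[
\log 2\int_{u_0}^{u_1}\frac{x^u}{u}\,du .
\]
We substitute $t=u\log x$, which turns this into $\int_{\log 2}^{u_1\log x}e^t\,dt/t$. This is $O(x^{u_1}/\log x)$ plus an $O(1)$ contribution from $t$ near $\log 2$. We only claim $O(\log\log x)$ as a safe bound. The resulting $x^{1/4}$ term is $O(x^{1/2}/(\log x)^3)$, so it is absorbed into the relative error.

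On the upper range we write $u=\tfrac12-v$ with $0\le v\le\eta$. Then
\[
\log\bigl(2(1-u)\bigr)=\log(1+2v)=2v+O(v^2),\qquad \frac1u=2+O(v),
\]
so the integrand becomes $x^{1/2}e^{-v\log x}\bigl(4v+O(v^2)\bigr)$. Extending the $v$-range to $[0,\infty)$ costs only $O(x^{1/2-\eta})$. The main term is $4x^{1/2}\int_0^\infty v e^{-v\log x}\,dv=4x^{1/2}/(\log x)^2$. The error term gives $x^{1/2}\int_0^\infty v^2e^{-v\log x}\,dv\ll x^{1/2}/(\log x)^3$, which is exactly the claimed relative factor $1+O(1/\log x)$.

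The only delicate point is keeping the Taylor remainders uniform on $[0,\eta]$. This holds because $\log(1+2v)$ and $1/(\tfrac12-v)$ are analytic there, with bounded second derivatives, once $\eta<1/2$. Combining the two ranges gives the stated formula.
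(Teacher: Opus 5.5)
Your proposal is correct and follows essentially the same route as the paper: bound the range away from $u=1/2$ trivially using $0\le\log(2(1-u))\le\log 2$, then substitute $v=\tfrac12-u$, Taylor-expand $\log(1+2v)$ and $1/u$, and evaluate the Laplace integral $\int_0^\infty v e^{-v\log x}\,dv=(\log x)^{-2}$. The differences are only cosmetic: you split at $u=1/4$ instead of $1/3$, and you bound the lower piece via $t=u\log x$, which shows it is $O(x^{1/4}/\log x+1)$, sharper than the stated $O(\log\log x)$.
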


\begin{proof}
Split $I=I_1+I_2$ with $I_1$ over $[u_0,\tfrac13]$ and $I_2$ over
$[\tfrac13,\tfrac12]$.

For $u\in[u_0,1/3]$, one has $0<\log(2(1-u))\le \log 2$. Hence
\[
I_1(x)\le \log 2\int_{u_0}^{1/3}\frac{x^u}{u}\,du.
\]
We further split the remaining integral at $1/\log x$.

First, if $u_0\le u\le 1/\log x$, then $x^u\le x^{1/\log x}=e$. Thus
\begin{equation}
\int_{u_0}^{1/\log x}\frac{x^u}{u}\,du
\le e\int_{u_0}^{1/\log x}\frac{du}{u}
=O(\log\log x).
\label{eq:I1_first}
\end{equation}

Second, if $1/\log x\le u\le 1/3$, then $1/u\le \log x$ and $x^u\le x^{1/3}$. Therefore
\begin{equation}
\int_{1/\log x}^{1/3}\frac{x^u}{u}\,du
\le \log x\int_{1/\log x}^{1/3}x^u\,du
= \log x\cdot \frac{x^{1/3}-x^{1/\log x}}{\log x}
=O(x^{1/3}).
\label{eq:I1_second}
\end{equation}

Combining \eqref{eq:I1_first} and \eqref{eq:I1_second}, we get
\[
I_1(x)=O(\log\log x)+O(x^{1/3}).
\]
Note that $\dfrac{x^{1/3}}{x^{1/2}/(\log x)^2}= \dfrac{(\log x)^2}{x^{1/6}} \to 0$ as $x\to\infty$, so $x^{1/3}=o\bigl(x^{1/2}/(\log x)^2\bigr)$.
Hence,
\begin{equation}
I_1(x)=O(\log\log x)+O\!\left(\frac{x^{1/2}}{(\log x)^2}\right).
\label{eq:I1_final}
\end{equation}

For $u\in[1/3,1/2]$, set $v=1/2-u$, so that $v\in[0,1/6]$. Then
\[
\log(2(1-u))=\log(1+2v),\qquad x^u=x^{1/2}e^{-v\log x},
\]
and
\[
\frac1u=\frac1{1/2-v}=2(1+O(v)).
\]
Substituting these into $I_2$ gives
\begin{equation}
I_2(x)=2x^{1/2}\int_0^{1/6}\log(1+2v)e^{-v\log x}\,dv
+O\!\left(x^{1/2}\int_0^{1/6}v\log(1+2v)e^{-v\log x}\,dv\right).
\label{eq:I2_expr}
\end{equation}

Let $t=\log x$ and make the change of variables $s=vt$. The main integral becomes
\begin{equation}
\int_0^{1/6}\log(1+2v)e^{-vt}\,dv
= \frac1t\int_0^{t/6}\log\!\left(1+\frac{2s}{t}\right)e^{-s}\,ds.
\label{eq:I2_main_integral}
\end{equation}
For $0\le s\le t/6$, one has the uniform expansion
\[
\log\!\left(1+\frac{2s}{t}\right)=\frac{2s}{t}+O\!\left(\frac{s^2}{t^2}\right).
\]
Hence
\begin{align*}
\int_0^{t/6}\log\!\left(1+\frac{2s}{t}\right)e^{-s}\,ds
&= \frac{2}{t}\int_0^{t/6}s e^{-s}\,ds
+O\!\left(\frac1{t^2}\int_0^{t/6}s^2 e^{-s}\,ds\right) \\
&= \frac{2}{t}\int_0^\infty s e^{-s}\,ds
+O\!\left(\frac1{t^2}\right),
\end{align*}
where one has used that the tails beyond $t/6$ are exponentially small. Since $\int_0^\infty s e^{-s}\,ds=1$, we obtain
\begin{equation}
\int_0^{t/6}\log\!\left(1+\frac{2s}{t}\right)e^{-s}\,ds
= \frac{2}{t}\bigl(1+O(1/t)\bigr).
\label{eq:I2_main_asym}
\end{equation}
Inserting \eqref{eq:I2_main_asym} into \eqref{eq:I2_main_integral} gives
\begin{equation}
\int_0^{1/6}\log(1+2v)e^{-vt}\,dv
= \frac{2}{t^2}\bigl(1+O(1/t)\bigr).
\label{eq:I2_main_result}
\end{equation}

For the error integral in \eqref{eq:I2_expr}, we use $\log(1+2v)=O(v)$, so
\[
\int_0^{1/6}v\log(1+2v)e^{-vt}\,dv
\le C\int_0^{1/6}v^2 e^{-vt}\,dv.
\]
Again setting $s=vt$, this becomes
\[
C\frac1{t^3}\int_0^{t/6}s^2 e^{-s}\,ds
=O\!\left(\frac1{t^3}\right).
\]
Therefore
\begin{equation}
x^{1/2}\int_0^{1/6}v\log(1+2v)e^{-v\log x}\,dv
= O\!\left(\frac{x^{1/2}}{(\log x)^3}\right).
\label{eq:I2_error}
\end{equation}

Combining \eqref{eq:I2_expr}, \eqref{eq:I2_main_result}, and \eqref{eq:I2_error}, one has
\begin{equation}
I_2(x)
=2x^{1/2}\cdot \frac{2}{(\log x)^2}\bigl(1+O(1/\log x)\bigr)
+O\!\left(\frac{x^{1/2}}{(\log x)^3}\right).
\label{eq:I2_comb}
\end{equation}
Since the last error term is of smaller order than the main relative error, we may write
\begin{equation}
I_2(x)=\frac{4x^{1/2}}{(\log x)^2}\bigl(1+O(1/\log x)\bigr).
\label{eq:I2_final}
\end{equation}

Adding \eqref{eq:I1_final} and \eqref{eq:I2_final}, one has
\[
I(x)=O(\log\log x)+O\!\left(\frac{x^{1/2}}{(\log x)^2}\right)
+\frac{4x^{1/2}}{(\log x)^2}\bigl(1+O(1/\log x)\bigr).
\]
The term $O(x^{1/2}/(\log x)^2)$ is dominated by the main term, so it is absorbed into the relative error. Hence
\begin{equation}
I(x)=\frac{4x^{1/2}}{(\log x)^2}
\Bigl(1+O\Bigl(\frac1{\log x}\Bigr)\Bigr)+O(\log\log x),
\label{eq:final_result}
\end{equation}
which is precisely the claimed asymptotic formula.

\end{proof}

\begin{lemma}\label{lem:pisum}
For $y\to\infty$, $\sum_{p\le y}\log(y/p)=O(y/\log y)$, and for
$2\le q\le y$,
$\sum_{q\le p\le y}\tfrac1p=\log\frac{\log y}{\log q}+O(1/\log q)$.
\end{lemma}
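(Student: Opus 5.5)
The two claims are independent, and I would derive both from the Chebyshev-type bounds and Mertens' theorem recalled in \eqref{eq:pnt} and \eqref{Mertensr1}.

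For the first claim I would write $\log(y/p)=\int_p^y dt/t$ and interchange sum and integral:
\[
\sum_{p\le y}\log\frac yp=\int_2^y\frac{\pi(t)}{t}\,dt .
\]
This is just Abel summation, since $\pi(y)\log y-\theta(y)=\int_2^y\pi(t)\,dt/t$. The difference $\pi(y)\log y-\theta(y)=O(y/\log y)$ is recalled in the preliminaries, so one could simply quote it. For a self-contained route, insert the Chebyshev bound $\pi(t)\ll t/\log t$ and estimate $\int_2^y dt/\log t$. To bound that integral, split it at $\sqrt y$. The piece over $[2,\sqrt y]$ is at most $\sqrt y$. On $[\sqrt y,y]$ we have $\log t\ge\frac12\log y$, so that piece is at most $2y/\log y$. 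Altogether the sum is $O(y/\log y)$, as claimed.

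For the second claim I would apply \eqref{Mertensr1} with $A=1$ at $y$ and just below $q$, so that the sum over $[q,y]$ is the difference. The constants $B_1$ cancel, leaving
\[
\sum_{q\le p\le y}\frac1p=\log\frac{\log y}{\log q}+O\Bigl(\frac1{\log y}+\frac1{\log q}\Bigr).
\]
Two small points need care. First, if $q$ is prime, the term $1/q$ must be included, and it is $O(1/q)=O(1/\log q)$. Second, replacing $\log\log(q-0)$ by $\log\log q$ costs only $O(1/(q\log q))$. Finally, $q\le y$ gives $1/\log y\le 1/\log q$, so all the errors merge into $O(1/\log q)$. This is the same computation already carried out in Lemma \ref{lem:rough}(ii), where $q$ was restricted to $q>y^{1/2}$. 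Here the argument does not use that restriction, so it covers the full range $2\le q\le y$.

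There is no real obstacle. The only points to watch are the endpoint convention at $p=q$ and uniformity for small $q$. Uniformity is automatic, because for bounded $q$ the error $O(1/\log q)$ is simply $O(1)$, which is consistent with \eqref{Mertensr1}.
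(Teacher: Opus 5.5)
Your proposal is correct and follows the paper's proof. The paper likewise reduces the first claim to the identity $\sum_{p\le y}\log(y/p)=\pi(y)\log y-\theta(y)=O(y/\log y)$ (it cites the prime number theorem, where you alternatively use $\int_2^y\pi(t)\,dt/t$ with only a Chebyshev bound), and it gets the second claim by differencing the refined Mertens theorem exactly as you do; your extra care about the endpoint $p=q$ and small $q$ is fine, and the piece over $[2,\sqrt y]$ is at most $\sqrt y/\log 2$ rather than $\sqrt y$, which changes nothing.
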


\begin{proof}
By \eqref{eq:pnt},
$\sum_{p\le y}\log(y/p)=\pi(y)\log y-\theta(y)=O(y/\log y)$.
The second claim is the refined Mertens theorem.
\end{proof}

\begin{lemma}\label{lem:laplace_cor12}
Define for large $x$
\[
J(x)=\int_{2}^{x^{1/2}} \frac{dt}{\log(x/t)\log t}.
\]
Then
\[
J(x)=\frac{4 x^{1/2}}{(\log x)^2}\Bigl(1+O\bigl(\tfrac1{\log x}\bigr)\Bigr).
\]
\end{lemma}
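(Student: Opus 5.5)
The plan is to reduce $J(x)$ to the Laplace-type integral of Lemma~\ref{lem:laplace}, or better, to evaluate it directly by the same method. Substitute $t=x^u$, so $dt=x^u\log x\,du$, $\log t=u\log x$ and $\log(x/t)=(1-u)\log x$. This gives
\[
J(x)=\frac{1}{\log x}\int_{u_0}^{1/2}\frac{x^u}{u(1-u)}\,du ,
\]
with $u_0=\log 2/\log x$ as in the paper. We then split the $u$-range at $1/3$, exactly as in the proof of Lemma~\ref{lem:laplace}.

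On $[u_0,1/3]$ we have $1/(1-u)\le 3/2$, so the bounds \eqref{eq:I1_first} and \eqref{eq:I1_second} give a contribution $O(\log\log x+x^{1/3})$. After dividing by $\log x$ this is $O(x^{1/3})$, which is $O\bigl(x^{1/2}(\log x)^{-3}\bigr)$ and hence absorbed into the relative error.

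On $[1/3,1/2]$ we put $v=1/2-u$, so $v\in[0,1/6]$. Then $x^u=x^{1/2}e^{-v\log x}$ and
\[
\frac{1}{u(1-u)}=\frac{1}{(1/2-v)(1/2+v)}=\frac{4}{1-4v^2}=4+O(v^2).
\]
Write $t=\log x$. The main piece is
\[
4x^{1/2}\int_0^{1/6}e^{-vt}\,dv=\frac{4x^{1/2}}{t}\bigl(1+O(x^{-1/6})\bigr),
\]
and the error piece is
\[
O\Bigl(x^{1/2}\int_0^\infty v^2e^{-vt}\,dv\Bigr)=O\bigl(x^{1/2}t^{-3}\bigr).
\]
So the integral equals $\frac{4x^{1/2}}{\log x}\bigl(1+O((\log x)^{-2})\bigr)$. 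Dividing by $\log x$ gives $J(x)=\frac{4x^{1/2}}{(\log x)^2}\bigl(1+O(1/\log x)\bigr)$; in fact the error is even $O((\log x)^{-2})$ in relative terms.

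There is a cross-check via Lemma~\ref{lem:laplace}. Integrating by parts, $\frac{d}{du}\log(2(1-u))=-1/(1-u)$, and the weight in $I(x)$ vanishes at $u=1/2$. Both computations put the main mass at $u=1/2$ with the same weight $1/(u(1-u))=4$ there. We can use this only as a consistency check, since the direct computation already suffices.

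I expect no real obstacle. The only delicate point is making sure that the small-$u$ range, including the endpoint $u_0$ where $1/u$ blows up, contributes only a negligible amount. This follows from the $\log\log x$ bound already established in \eqref{eq:I1_first}, since after dividing by $\log x$ it is $o(1)$ and far below the main term.
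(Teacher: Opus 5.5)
Your proposal is correct and follows the paper's proof: the substitution $t=x^u$, then $v=\tfrac12-u$ with $u(1-u)=\tfrac14-v^2$, and the Laplace integrals $\int e^{-vt}\,dv$ and $\int v^2e^{-vt}\,dv$. Your split at $u=1/3$ is slightly more careful than the paper, because the paper's uniform expansion $\tfrac14(1+O(v^2))$ fails near $u_0$, where $1/(u(1-u))$ blows up, and the split handles that range rigorously; your observation that the symmetry of $u(1-u)$ about $\tfrac12$ gives a sharper relative error $O((\log x)^{-2})$ is also correct.
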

\begin{proof}
Change variable $t=x^u$, $dt=x^u\log x\,du$, $\log t=u\log x$, $\log(x/t)=(1-u)\log x$.
\[
J(x)=\int_{u_0}^{1/2} \frac{x^u \log x}{(1-u)\log x \cdot u \log x}\,du
=\frac{1}{\log x}\int_{u_0}^{1/2}\frac{x^u}{u(1-u)}\,du,\quad u_0=\frac{\log 2}{\log x}.
\]
Set $u=\tfrac12-v$, $du=-dv$, $x^u=x^{1/2}e^{-v\log x}$,
$u(1-u)=(\tfrac12-v)(\tfrac12+v)=\tfrac14-v^2=\tfrac14(1+O(v^2))$.
\[
J(x)=\frac{x^{1/2}}{\log x}\int_{0}^{1/2-u_0} \frac{e^{-v\log x}}{\frac14(1+O(v^2))}\,dv
= \frac{4 x^{1/2}}{\log x}\int_{0}^{\infty} e^{-v\log x}\bigl(1+O(v^2)\bigr)dv + o\left(\frac{x^{1/2}}{(\log x)^2}\right).
\]
Substitute $s=v\log x$, $dv=ds/\log x$:
\[
\int_0^\infty e^{-v\log x}dv=\frac{1}{\log x},\qquad
\int_0^\infty v^2 e^{-v\log x}dv=\frac{2}{(\log x)^3}.
\]
Collect terms:
\[
J(x)=\frac{4 x^{1/2}}{(\log x)^2}\Bigl(1+O\bigl(\tfrac1{\log x}\bigr)\Bigr).
\]
\end{proof}

\section{The asymptotic for $S(x)$}\label{sec:S}

\subsection{Semiprime reduction}

For $m\ge2$ write $m=p(m)\,k$ with $p(m)$ the least prime factor of
$m$.  For a fixed $m$, the integers $n=pm$ with $p(n)=p$ and
$p(n)\mid n$ are exactly those with $p\le p(m)$; and $n<x$ is
equivalent to $p\le x/m$.  Hence, exactly,
\begin{equation}\label{eq:exact}
S(x)=\sum_{2\le m<x}\frac1m\,
\pi\bigl(\min(p(m),\,x/m)\bigr).
\end{equation}

\begin{lemma}[Semiprime dominance]\label{lem:semi}
Let
\[
S(x):=\sum_{\substack{n<x\\ n\text{ composite}}}\frac{p(n)}n,\qquad
S_{\mathrm{semi}}(x):=\sum_{\substack{pq<x\\ p\le q}}\frac1q .
\]
Then
\[
S(x)=S_{\mathrm{semi}}(x)+O\!\left(\frac{x^{1/3}}{\log x}\right).
\]
\end{lemma}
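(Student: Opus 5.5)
We split composites $n<x$ by $\Omega(n)$. Every composite $n$ with $\Omega(n)=2$ can be written uniquely as $n=pq$ with primes $p\le q$. For such $n$ we have $p(n)=p$, hence $p(n)/n=1/q$. The semiprime part of $S(x)$ is therefore exactly $S_{\mathrm{semi}}(x)$, and it remains to show
\[
T(x):=\sum_{\substack{n<x\\ \Omega(n)\ge3}}\frac{p(n)}{n}=O\Bigl(\frac{x^{1/3}}{\log x}\Bigr).
\]

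We use the decomposition $n=p\,m$ with $p=p(n)$, so that $p(n)/n=1/m$. Here $m\ge2$ is composite, $p(m)\ge p$, and $\Omega(m)\ge2$. Write $m=r k$ with $r=p(m)\ge p$ and $k\ge r$. Then $n\ge p^3$, which forces $p<x^{1/3}$. Summing first over $m$, in the spirit of \eqref{eq:exact},
\[
T(x)=\sum_{p<x^{1/3}}\ \sum_{\substack{m<x/p,\ m\notin\PP\\ p(m)\ge p}}\frac1m .
\]
The plan is to bound the inner sum by expanding $m=rk$ again. The key point is that $p(n)/n=1/m\le 1/(p\cdot p)$ only crudely. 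A cleaner route is to sum over the cofactor directly: group by $n=p\,r\,k$ with $p\le r\le p(k)$ and $prk<x$.

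The weight is $1/(rk)$. For fixed $p$ and $r$, summing $1/k$ over $k\ge r$ with $p(k)\ge r$ and $k<x/(pr)$ gives, by Lemma \ref{lem:rough}(ii) and (i), roughly $\mathcal R_r(x/(pr))-1$. This is $\ll \log\frac{\log(x/pr)}{\log r}$ plus lower-order terms.

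The bulk of the contribution should come from $p,r$ close to $x^{1/3}$, where $k$ is essentially a prime in $[r,x/(pr)]$. There the count of admissible $n$ is controlled by $\pi$, and the total is comparable to
\[
\sum_{p\le r}\frac{1}{r}\sum_{r\le k<x/(pr)}\frac1k .
\]

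Alternatively, we bound $1/m$ by $p/n$ and count. The contribution of $n<x$ with $\Omega(n)\ge3$ and $p(n)=p$ is
\[
\le p\sum_{\substack{n<x,\ p(n)=p\\ \Omega(n)\ge 3}}\frac1n .
\]
We estimate this via $\sum_{m<x/p,\ p(m)\ge p,\ m\notin\PP}1/m$. Partial summation against the count $\Phi$ of rough composites (Buchstab, \eqref{eq:Phi_asym}) gives a bound $\ll \frac{1}{\log p}\log\frac{x}{p^3}$ plus boundary terms. Summing over $p<x^{1/3}$ with weight $1$ is then dominated by $p$ near $x^{1/3}$, where $\log(x/p^3)$ is small.

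The expected outcome is $\sum_{p<x^{1/3}}(\ldots)\ll \pi(x^{1/3})\ll x^{1/3}/\log x$. The main obstacle is making this last step rigorous and uniform. For small $p$ the inner rough sum is of size $\delta_p\log x\,\omega(\cdot)\asymp \log x/\log p$, and summing this naively over $p<x^{1/3}$ gives $\pi(x^{1/3})\log x/\log p$, which is too large. So one must exploit that most $m$ with small $p(m)$ lie in a range where $p(n)/n$ is tiny.

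The first thing to try is to split at $p\le x^{1/4}$ versus $x^{1/4}<p<x^{1/3}$.

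\begin{itemize}
\item In the upper range, $m<x^{3/4}$ has $p(m)>x^{1/4}$, so $m$ is a product of exactly two primes $r\le k$. Lemma \ref{lem:pisum} then gives an inner sum $\ll \sum_{r}\frac1r\cdot\frac{1}{\log r}\ll 1/\log x$ per $p$, for a total $\ll x^{1/3}/(\log x)^2$.
\item In the lower range, we use the crude bound $\sum_{p\le x^{1/4}}\mathcal R_p(x/p)\ll x^{1/4}\log x$, which is acceptable.
\end{itemize}

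Should the claimed $x^{1/3}/\log x$ prove delicate at the boundary, I would accept $O(x^{1/3}+\log x)$, as the method section indicates.
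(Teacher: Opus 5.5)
Your closing two-range argument has the same shape as the paper's case split (trivial $\log x$ bound for $p\le x^{1/4}$, two-prime cofactor plus Mertens for $x^{1/4}<p<x^{1/3}$, nothing beyond $x^{1/3}$). One step, however, is false as written.

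In the upper range the inner sum $\sum_{r\le k<x/(pr)}1/k=\log\frac{\log(x/pr)}{\log r}+O(1/\log r)$ is \emph{not} $\ll 1/\log r$. For $p=x^{\alpha}$, $r=x^{\beta}$ with fixed $1/4<\alpha\le\beta$ and $\alpha+2\beta<1$, it equals $\log\frac{1-\alpha-\beta}{\beta}+o(1)\asymp 1$. So your per-$p$ bound $\ll 1/\log x$ fails.

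What does hold, and suffices, is that both log-ratios are bounded: below $3/2$ for the $r$-sum over $[p,\sqrt{x/p}]$, and below $2$ for the $k$-sum, because $p,r>x^{1/4}$. Hence the inner sum is $O(1)$ per $p$, and the upper range contributes $\ll\pi(x^{1/3})\ll x^{1/3}/\log x$. That is exactly the lemma, so there is no need to fall back to $O(x^{1/3}+\log x)$.

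Your stated total $x^{1/3}/(\log x)^2$ is in fact true, just not for the reason you gave. Using $\log(a/b)\le (a-b)/b$ and $r\ge p$, each factor is $\ll(1+\log(x/p^3))/\log x$; summing the product over $p$ even gives $x^{1/3}/(\log x)^3$.

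The ``naive'' bound you rejected also works. Uniformly for $2\le p\le y$, Mertens gives $\mathcal R_p(y)\le\prod_{p\le r\le y}(1-1/r)^{-1}\ll\log y/\log p$. Then $\log x\sum_{p<x^{1/3}}1/\log p\ll x^{1/3}/\log x$, because $\log x/\log p\asymp1$ for the bulk of these $p$. That is a one-line proof.

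Relative to the paper, your bookkeeping is the better one. You index by $p=p(n)$, so the error is exactly the sum over $p<x^{1/3}$ of the composite part of $\mathcal R_p(x/p)$, and $p<x^{1/3}$ is automatic from $n\ge p^3$.

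The paper instead indexes by $q=p(m)$ with weight $\pi(q)/q$ and range $k\le x/q^2$. This silently replaces the true constraint $k<x/(p(n)\,q)$ by $k<x/q^2$. Its $T(x)$ is therefore only a lower bound for the error. Its Case 3 claim that $q>x^{1/3}$ contributes nothing is also false: for example, $n=2qk$ with primes $q\approx x^{0.4}$ and $k\approx x^{0.5}$ is missed.

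Once your per-$p$ estimate is corrected, your argument proves the lemma without that gap.
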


\begin{proof}
For each composite $n$, write $n=p(n)m$, where $m=n/p(n)\ge2$ and $p(m)\ge p(n)$. Then $p(n)/n=1/m$, and the condition $n<x$ becomes $p(n)m<x$. Summing over the least prime factor $q=p(n)$ gives
\begin{equation}
S(x)=\sum_{\substack{q\in\mathbb P}}\sum_{\substack{m\ge2\\ p(m)\ge q\\ qm<x}}\frac1m .
\end{equation}
The contribution from prime $m$ (i.e. $m=p$ prime) is exactly $S_{\mathrm{semi}}(x)$, because then $n=qp$ with $q\le p$ and the weight is $1/p=1/\max(q,p)$. Thus the error term in the lemma is precisely the contribution from composite $m$'s.

Let
\begin{equation}
T(x):=\sum_{\substack{q\in\mathbb P}}\frac{\pi(q)}{q}
\sum_{\substack{k\ge2\\ p(k)\ge q\\ k\le x/q^2}}\frac1k,
\end{equation}
where for each composite $m=qk$ one has $q=p(m)$, $k\ge2$, $p(k)\ge q$, and $qm<x$ gives $q^2k<x$, i.e. $k<x/q^2$. We will show
\[
T(x)\ll \frac{x^{1/3}}{\log x}.
\]

Split the $q$-range into three parts.

{\bf{Case 1:}} Consider the case $q\le x^{1/4}$.

For such $q$, note the trivial bound
\[
\sum_{\substack{k\le x/q^2\\ p(k)\ge q}}\frac1k
\le \sum_{k\le x/q^2}\frac1k
= \log(x/q^2)+O(1)
\ll \log x,
\]
since $x/q^2\ge x^{1/2}$. So,
\[
T_1(x):=\sum_{q\le x^{1/4}}\frac{\pi(q)}{q}
\sum_{\substack{k\le x/q^2\\ p(k)\ge q}}\frac1k
\ll \log x \sum_{q\le x^{1/4}}\frac{\pi(q)}q .
\]
By the elementary trivial bound $\pi(q)\le q$, one has
\[
\sum_{q\le x^{1/4}}\frac{\pi(q)}{q}
\le \sum_{q\le x^{1/4}} 1 \ll x^{1/4}.
\]
Thus
\[
T_1(x)\ll (\log x)\, x^{1/4}.
\]
Since $x^{1/4}\log x = o\big(x^{1/3}/\log x\big)$ as $x\to\infty$, this contribution is negligible for the claimed error bound.

{\bf{Case 2:}} Consider the case $x^{1/4}< q\le x^{1/3}$.

For $q$ in this range, put $y=x/q^2$. Then $q>y^{1/2}$, because
\[
q>y^{1/2}\quad\Longleftrightarrow\quad q>\frac{\sqrt{x}}q \quad\Longleftrightarrow\quad q^2>\sqrt{x}\quad\Longleftrightarrow\quad q>x^{1/4}.
\]
Moreover, $q\le y$ since $y=x/q^2$ and $q\le x^{1/3}$ imply $q^3\le x$, hence $y=x/q^2\ge q$. Thus the condition $y^{1/2}<q\le y$ of Lemma \ref{lem:rough}(ii) is satisfied. Applying Lemma \ref{lem:rough}(ii) gives
\[
\sum_{\substack{k\le y\\ p(k)\ge q}}\frac1k
=1+\log\frac{\log y}{\log q}+O\!\left(\frac1{\log q}\right).
\]
Since $k\ge2$, we subtract the term $k=1$, so
\[
\sum_{\substack{2\le k\le y\\ p(k)\ge q}}\frac1k
= \log\frac{\log y}{\log q}+O\!\left(\frac1{\log q}\right).
\]
Now $\log y=\log x-2\log q$, and for $q\in(x^{1/4},x^{1/3}]$ one has $\frac{\log x}{\log q}\in[3,4)$, so
\[
\log\frac{\log y}{\log q}
= \log\!\left(\frac{\log x}{\log q}-2\right)
=O(1).
\]
Thus the inner sum is $O(1)$. Therefore
\[
T_2(x):=\sum_{x^{1/4}<q\le x^{1/3}}\frac{\pi(q)}q
\sum_{\substack{2\le k\le x/q^2\\ p(k)\ge q}}\frac1k
\ll \sum_{q\le x^{1/3}}\frac{\pi(q)}q
\ll \frac{x^{1/3}}{\log x}.
\]

{\bf{Case 3:}} Consider the case $q>x^{1/3}$.

If $q>x^{1/3}$, then $q^3>x$, so $x/q^2<q$. Hence for any $k\le x/q^2<q$ one has $p(k)\le k<q$, so the condition $p(k)\ge q$ cannot hold (except $k=1$, but $k\ge2$). Thus the inner sum is empty and contributes nothing.

Combining the three cases yields
\[
T(x)=O(x^{1/4})+O\!\left(\frac{x^{1/3}}{\log x}\right)
=O\!\left(\frac{x^{1/3}}{\log x}\right),
\]
which proves the lemma.
\end{proof}

\subsection{Evaluation of the semiprime sum}

\begin{lemma}\label{lem:part1}
For \(y\to\infty\),
\[
\sum_{q\le y}\frac{\pi(q)}{q}
= \frac{y}{(\log y)^2}\left(1+O\!\left(\frac1{\log y}\right)\right).
\]
In particular, with \(y=\sqrt{x}\),
\[
\sum_{q\le \sqrt{x}}\frac{\pi(q)}{q}
= \frac{4\sqrt{x}}{(\log x)^2}\left(1+O\!\left(\frac1{\log x}\right)\right).
\]
\end{lemma}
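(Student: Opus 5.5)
We prove the first assertion by partial summation against the prime number theorem, and then obtain the second by substituting $y=\sqrt x$.

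Write
\[
\sum_{q\le y}\frac{\pi(q)}{q}=\int_{2^-}^{y}\frac{\pi(t)}{t}\,d\pi(t).
\]
It is cleaner to compare this with the integral $\int_2^y \pi(t)/(t\log t)\,dt$. Set $f(t)=\pi(t)/t$. Stieltjes integration against $d\pi(t)=dt/\log t+dE(t)$, with $E(t)=\pi(t)-\operatorname{li}(t)=O(t e^{-c\sqrt{\log t}})$ by \eqref{eq:pnt}, gives
\[
\sum_{q\le y}f(q)=\int_2^y\frac{\pi(t)}{t\log t}\,dt+\Bigl[f(t)E(t)\Bigr]_2^y-\int_2^y E(t)\,df(t).
\]

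The boundary term is $O(\pi(y)e^{-c\sqrt{\log y}})$, which is certainly $O(y/(\log y)^3)$. For the remaining Stieltjes integral, note that $df(t)=d\pi(t)/t-\pi(t)\,dt/t^2$. Hence
\[
\int_2^y |E(t)|\,|df(t)|\ll \sum_{q\le y}e^{-c\sqrt{\log q}}+\int_2^y\frac{e^{-c\sqrt{\log t}}}{\log t}\,dt\ll y e^{-c'\sqrt{\log y}},
\]
which is again acceptable. This positivity-based bound on $|df|$ is the step that needs some care, since $f$ is not monotone. Splitting $df$ into its jump part and its absolutely continuous part, as above, handles it.

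For the main integral, insert $\pi(t)=t/\log t+O(t/(\log t)^2)$. This gives
\[
\int_2^y\frac{dt}{(\log t)^2}+O\Bigl(\int_2^y\frac{dt}{(\log t)^3}\Bigr).
\]
Integrating by parts,
\[
\int_2^y\frac{dt}{(\log t)^2}=\frac{y}{(\log y)^2}+2\int_2^y\frac{dt}{(\log t)^3}+O(1).
\]
Split any integral $\int_2^y dt/(\log t)^k$ at $\sqrt y$. The range $t\le\sqrt y$ contributes $O(\sqrt y)$, and the range $t>\sqrt y$ contributes $O(y/(\log y)^k)$. So $\int_2^y dt/(\log t)^3=O(y/(\log y)^3)$, and therefore the main integral equals
\[
\frac{y}{(\log y)^2}\Bigl(1+O\Bigl(\frac1{\log y}\Bigr)\Bigr).
\]
Combining this with the two error estimates above proves the first claim.

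For the particular case, put $y=\sqrt x$, so that $\log y=\tfrac12\log x$. Then
\[
\frac{y}{(\log y)^2}=\frac{4\sqrt x}{(\log x)^2},
\]
and $O(1/\log y)=O(1/\log x)$, which gives the second formula.
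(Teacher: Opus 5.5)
Your proof is correct, but it takes a different route from the paper's. The paper exploits the special form of the summand. Integrating $\int f\,d\pi$ by parts with $f=\pi(t)/t$ makes the original sum reappear on the right-hand side, which gives the symmetrized identity $\sum_{q\le y}\pi(q)/q=\frac12\pi(y)^2/y+\frac12\int_{2}^{y}\pi(t)^2t^{-2}\,dt$. Both pieces are then evaluated directly from $\pi(t)=t/\log t+O(t/(\log t)^2)$, and each contributes $y/(2(\log y)^2)$. In that route no Stieltjes integral against a prime-counting error has to be bounded, and the non-monotonicity of $f$ never comes up. (Strictly speaking, because $f$ and $\pi$ jump at the same points, that identity is exact only up to $\frac12\sum_{q\le y}1/q=O(\log\log y)$, which is harmless.)

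Your decomposition $d\pi(t)=dt/\log t+dE(t)$ is the generic partial-summation argument. It costs you the total-variation estimate for $df$, which you handle correctly by separating the jump part from the absolutely continuous part. In exchange, the same simultaneous-jump subtlety is absorbed automatically, since you only bound $\int E\,df$ in absolute value, and the method is not tied to summands of the shape $\pi(q)w(q)$.

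You also do not need the de la Vall\'ee Poussin error term. The bound $E(t)\ll t/(\log t)^2$ already makes the boundary term and $\int|E|\,|df|$ of size $O(y/(\log y)^3)$. Since $\operatorname{li}(t)=t/\log t+O(t/(\log t)^2)$, this is exactly the weak form of the prime number theorem that the paper uses.
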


\begin{proof}
Let \(f(t)=\pi(t)/t\) and \(g(t)=\pi(t)\). By Stieltjes integration, one has
\[
\sum_{q\le y}\frac{\pi(q)}{q}
= \int_{2^-}^{y} f(t)\,dg(t).
\]
This, together with integration by parts (Stieltjes version), implies that
\[
\int_{2^-}^{y} f\,dg = f(y)g(y)-f(2^-)g(2^-)-\int_{2^-}^{y} g(t)\,df(t).
\]
Further, note that \(f(2^-)=0\), \(g(y)=\pi(y)\), and \(g(2^-)=0\), one has
\[
\sum_{q\le y}\frac{\pi(q)}{q}
= \frac{\pi(y)^2}{y}
-\int_{2^-}^{y} \pi(t)\,d\!\left(\frac{\pi(t)}{t}\right).
\]
Note that
\[
d\!\left(\frac{\pi(t)}{t}\right)=\frac{1}{t}\,d\pi(t)-\frac{\pi(t)}{t^2}\,dt,
\]
implying that
\[
\sum_{q\le y}\frac{\pi(q)}{q}
= \frac{\pi(y)^2}{y}
-\int_{2^-}^{y} \frac{\pi(t)}{t}\,d\pi(t)
+\int_{2^-}^{y} \frac{\pi(t)^2}{t^2}\,dt.
\]
Note that the first integral is exactly the original sum. So,
\[
2\sum_{q\le y}\frac{\pi(q)}{q}
= \frac{\pi(y)^2}{y}
+\int_{2^-}^{y} \frac{\pi(t)^2}{t^2}\,dt.
\]
Thus, one has the exact identity
\begin{equation}
\sum_{q\le y}\frac{\pi(q)}{q}
= \frac12\,\frac{\pi(y)^2}{y}
+\frac12\int_{2^-}^{y} \frac{\pi(t)^2}{t^2}\,dt.
\label{eq:identity}
\end{equation}
Recall the prime number theorem in the form
\[
\pi(t)=\frac{t}{\log t}+O\!\left(\frac{t}{(\log t)^2}\right),
\]
which is sufficient for the desired main term.

For the first term in \eqref{eq:identity},
\begin{equation}
\frac12\,\frac{\pi(y)^2}{y}
= \frac12\cdot \frac{y}{(\log y)^2}\left(1+O\!\left(\frac1{\log y}\right)\right)
= \frac{y}{2(\log y)^2}+O\!\left(\frac{y}{(\log y)^3}\right).
\label{eq:firstterm}
\end{equation}

For the second term, set \(I(y)=\int_{2^-}^{y} \frac{\pi(t)^2}{t^2}\,dt\). Since
\[
\pi(t)^2 = \frac{t^2}{(\log t)^2}\left(1+O\!\left(\frac1{\log t}\right)\right),
\]
one has
\[
I(y)=\int_2^y \frac{dt}{(\log t)^2}
+O\!\left(\int_2^y \frac{dt}{(\log t)^3}\right).
\]
Integration by parts gives
\[
\int_2^y \frac{dt}{(\log t)^2}
= \frac{y}{(\log y)^2} + 2\int_2^y \frac{dt}{(\log t)^3} + O(1)
= \frac{y}{(\log y)^2}+O\!\left(\frac{y}{(\log y)^3}\right),
\]
and similarly \(\int_2^y \frac{dt}{(\log t)^3}=O\!\left(\frac{y}{(\log y)^3}\right)\). Therefore
\begin{equation}
\frac12 I(y)
= \frac{y}{2(\log y)^2}+O\!\left(\frac{y}{(\log y)^3}\right).
\label{eq:secondterm}
\end{equation}

Adding \eqref{eq:firstterm} and \eqref{eq:secondterm}, we obtain
\[
\sum_{q\le y}\frac{\pi(q)}{q}
= \frac{y}{(\log y)^2}+O\!\left(\frac{y}{(\log y)^3}\right)
= \frac{y}{(\log y)^2}\left(1+O\!\left(\frac1{\log y}\right)\right).
\]

Finally, substituting \(y=\sqrt{x}\) gives
\[
\sum_{q\le \sqrt{x}}\frac{\pi(q)}{q}
= \frac{\sqrt{x}}{(\frac12\log x)^2}
\left(1+O\!\left(\frac1{\log x}\right)\right)
= \frac{4\sqrt{x}}{(\log x)^2}
\left(1+O\!\left(\frac1{\log x}\right)\right),
\]
which proves the lemma.
\end{proof}

\begin{lemma}\label{lem:part2} For \(x\to\infty\),
\[
\sum_{x^{1/2}<q<x}\frac{\pi(x/q)}{q}
= \frac{4x^{1/2}}{(\log x)^2}
\left(1+O\!\left(\frac1{\log x}\right)\right)
+ O(\log\log x).
\]
\end{lemma}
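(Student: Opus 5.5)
We need to estimate $\sum_{x^{1/2}<q<x}\pi(x/q)/q$. The plan is to convert it to a Stieltjes integral against $\pi$, replace both $\pi(x/t)$ and $d\pi(t)$ by their smooth prime-number-theorem approximations, and reduce the main term to Lemma \ref{lem:laplace} (equivalently Lemma \ref{lem:laplace_cor12}), with the $O(\log\log x)$ coming from the range $q$ close to $x$.

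\textbf{Restricting the range.} Since $\pi(x/q)=0$ for $q>x/2$, the sum runs over $x^{1/2}<q\le x/2$. Write it as
\[
\int_{x^{1/2}}^{x/2}\frac{\pi(x/t)}{t}\,d\pi(t).
\]

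\textbf{Replacing $d\pi$.} Replace $d\pi(t)$ by $dt/\log t$, writing $\pi(t)=\operatorname{li}(t)+E(t)$ with $E(t)\ll t e^{-c\sqrt{\log t}}$ by \eqref{eq:pnt}. The error term is handled by integrating by parts: it equals a boundary term plus $\int E(t)\,d(\pi(x/t)/t)$. The function $\pi(x/t)/t$ is monotone decreasing with total size $\ll x^{1/2}/\log x$ at $t=x^{1/2}$. Hence the error is $\ll \sup_{t\ge x^{1/2}}|E(t)|/t\cdot x^{1/2}/\log x\ll x^{1/2}e^{-c'\sqrt{\log x}}$, which is acceptable.

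\textbf{Reducing to a prime sum.} The main term becomes $\int_{x^{1/2}}^{x/2}\pi(x/t)\,dt/(t\log t)$. Substitute $s=x/t$, so that $dt/t=ds/s$ and $\log t=\log(x/s)$. This turns the integral into
\[
\int_2^{x^{1/2}}\frac{\pi(s)}{s\log(x/s)}\,ds
=\sum_{p\le x^{1/2}}\int_p^{x^{1/2}}\frac{ds}{s\log(x/s)}
=\sum_{p\le x^{1/2}}\log\frac{\log(x/p)}{\log(x^{1/2})}.
\]
Since $\log(x/p)/\tfrac12\log x=2(1-u)$ with $p=x^u$, this is $\sum_{p\le x^{1/2}}\log(2(1-u_p))$.

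\textbf{Evaluating via the Laplace lemma.} Partial summation against $\pi(s)=\operatorname{li}(s)+E(s)$ gives the integral $\int_2^{x^{1/2}}\log(2(1-\log s/\log x))\,ds/\log s$. Under $s=x^u$ this is exactly $I(x)$ of Lemma \ref{lem:laplace}, namely $4x^{1/2}/(\log x)^2\,(1+O(1/\log x))+O(\log\log x)$.

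\textbf{The main obstacle.} The delicate step is controlling the remainder from $E(s)$ in this last partial summation. The weight $\log(2(1-u))$ vanishes at $s=x^{1/2}$ and has derivative $\ll 1/(s\log x)$. The remainder is therefore $\ll \int_2^{x^{1/2}}|E(s)|\,ds/(s\log x)+O(1)$, which is $\ll x^{1/2}e^{-c\sqrt{\log x}}$ and negligible.

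One should also check the boundary terms near $t=x/2$, i.e.\ small $s$. There $\pi(x/t)$ jumps and the integrand $1/(t\log t)$ is of size $1/(x\log x)$. Using the crude bound $E(s)\ll s/\log s$ on these small ranges gives a total of $O(\log\log x)$, matching the stated error term.
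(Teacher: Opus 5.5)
Your proof is correct and has the same skeleton as the paper. Both reduce the sum to $\mathcal S(x)=\sum_{p\le x^{1/2}}\log\bigl(2(1-\log p/\log x)\bigr)$. Both then turn $\mathcal S(x)$, by partial summation against the prime number theorem, into the integral $I(x)$ of Lemma~\ref{lem:laplace}.

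You differ in the first reduction. The paper interchanges the two prime sums and applies Mertens to each inner sum $\sum_{x^{1/2}<q\le x/p}1/q$. You smooth the $q$-variable once, by a Stieltjes integration by parts against $E(t)=\pi(t)-\mathrm{li}(t)$. You then obtain $\mathcal S(x)$ \emph{exactly} from $s=x/t$ and $\int_p^{x^{1/2}}\frac{ds}{s\log(x/s)}=\log\frac{\log(x/p)}{\log x^{1/2}}$.

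This buys better error control. As written, the paper carries an $O(1/\log x)$ error per inner sum, which accumulates to $O(\pi(x^{1/2})/\log x)=O(x^{1/2}/(\log x)^2)$. That is the same order as the main term, so the constant $4$ is only secured once the $O_A((\log x)^{-A})$ form of \eqref{Mertensr1} is used. Your route gives an error $O(x^{1/2}e^{-c\sqrt{\log x}})$ directly, at the cost of some Stieltjes bookkeeping.

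One justification needs repair. The function $f(t)=\pi(x/t)/t$ has $f(x^{1/2})\approx 2/\log x$, not $x^{1/2}/\log x$. Pairing its total variation with the only available bound $\sup_{t\le x/2}|E(t)|\ll xe^{-c\sqrt{\log x}}$ would give something far larger than the main term. Your stated bound is nevertheless right, via
\[
\int|E(t)|\,|df(t)|\le\sup_{t\ge x^{1/2}}\frac{|E(t)|}{t}\int_{x^{1/2}}^{x/2}t\,|df(t)|,\qquad \int_{x^{1/2}}^{x/2}t\,|df(t)|\le\pi(x^{1/2})+\int_2^{x^{1/2}}\frac{\pi(s)}{s}\,ds\ll\frac{x^{1/2}}{\log x}.
\]
Since $t\ge x^{1/2}$ throughout, this already covers $t$ near $x/2$, so your last paragraph is unnecessary. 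The boundary term at $s=2$ in the final partial summation is $O(1)$, and the only $O(\log\log x)$ in your argument is the one inherited from Lemma~\ref{lem:laplace}.
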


\begin{proof}
We begin with the exact identity
\[
\sum_{x^{1/2}<q<x}\frac{\pi(x/q)}{q}
= \sum_{\substack{q\in\mathbb P\\ x^{1/2}<q<x}}
\frac{1}{q}\sum_{\substack{p\in\mathbb P\\ p\le x/q}}1.
\]
Interchanging the order of summation (all sums are finite) gives
\begin{equation}\label{equ-2026-8-17-1}
\sum_{x^{1/2}<q<x}\frac{\pi(x/q)}{q}
= \sum_{\substack{p\in\mathbb P\\ p\le x^{1/2}}}
\sum_{\substack{q\in\mathbb P\\ x^{1/2}<q\le x/p}}\frac{1}{q}.  
\end{equation}
(Here we used that \(p\le x/q\) and \(q>x^{1/2}\) imply \(p\le x/q < x^{1/2}\), so the outer range is \(p\le x^{1/2}\).)

Now fix a prime \(p\le x^{1/2}\). By the refined Mertens theorem \eqref{Mertensr1},
\[
\sum_{x^{1/2}<q\le x/p}\frac{1}{q}
= \log\frac{\log(x/p)}{\log x^{1/2}}
+ O\!\left(\frac{1}{\log x}\right),
\]
where the error is uniform in \(p\le x^{1/2}\) because \(\log(x/p)\asymp \log x\). Hence
\begin{equation}\label{equ-2026-8-17-2}
\sum_{x^{1/2}<q\le x/p}\frac{1}{q}
= \log\!\left(2\left(1-\frac{\log p}{\log x}\right)\right)
+ O\!\left(\frac{1}{\log x}\right).
\end{equation}

Insert \eqref{equ-2026-8-17-2} into \eqref{equ-2026-8-17-1}:
\[
\sum_{x^{1/2}<q<x}\frac{\pi(x/q)}{q}
= \sum_{\substack{p\in\mathbb P\\ p\le x^{1/2}}}
\log\!\left(2\left(1-\frac{\log p}{\log x}\right)\right)
+ O\!\left(\frac{\pi(x^{1/2})}{\log x}\right).
\]
The error term is \(O\!\left(\frac{x^{1/2}}{(\log x)^2}\right)\), which will be absorbed into the final relative error.

It remains to evaluate the main sum
\[
\mathcal S(x):=\sum_{\substack{p\in\mathbb P\\ p\le x^{1/2}}}
\log\!\left(2\left(1-\frac{\log p}{\log x}\right)\right).
\]
By the prime number theorem and partial summation (see, e.g., the standard argument leading to Lemma \ref{lem:part1}), one has
\[
\mathcal S(x)
= \int_{2}^{x^{1/2}}
\frac{1}{\log t}
\log\!\left(2\left(1-\frac{\log t}{\log x}\right)\right) dt
+ O\!\left(x^{1/2} e^{-c\sqrt{\log x}}\right).
\]
Make the change of variables \(t=x^u\). Then \(dt = x^u \log x\,du\), \(\log t = u\log x\), and the limits become \(u_0=\log 2/\log x\) to \(u=1/2\). The integral becomes
\[
\int_{u_0}^{1/2}
\frac{x^u}{u}
\log(2(1-u))\,du
+ O\!\left(x^{1/2} e^{-c\sqrt{\log x}}\right).
\]
By Lemma \ref{lem:laplace}, this integral equals
\[
\frac{4x^{1/2}}{(\log x)^2}
\left(1+O\!\left(\frac1{\log x}\right)\right)
+ O(\log\log x).
\]
Therefore
\[
\mathcal S(x)
= \frac{4x^{1/2}}{(\log x)^2}
\left(1+O\!\left(\frac1{\log x}\right)\right)
+ O(\log\log x).
\]

Combining this with the earlier error term \(O(x^{1/2}/(\log x)^2)\) (which is smaller than the main term's relative error) gives
\[
\sum_{x^{1/2}<q<x}\frac{\pi(x/q)}{q}
= \frac{4x^{1/2}}{(\log x)^2}
\left(1+O\!\left(\frac1{\log x}\right)\right)
+ O(\log\log x).
\]
This proves the lemma. 
\end{proof}

\begin{proof}[Proof of Theorem \ref{thm:c8}]
By Lemma \ref{lem:semi},
$S(x)=S_{\mathrm{semi}}(x)+O(x^{1/3}/\log x)$, and
\[
S_{\mathrm{semi}}(x)
=\sum_{q\le x^{1/2}}\frac{\pi(q)}q
+\sum_{x^{1/2}<q<x}\frac{\pi(x/q)}q
=\frac{8x^{1/2}}{(\log x)^2}\Bigl(1+O\Bigl(\frac1{\log x}\Bigr)\Bigr)
+O(\log\log x)
\]
by Lemmas \ref{lem:part1} and \ref{lem:part2}.
\end{proof}

\begin{proof}[Proof of Corollary \ref{cor:A}]
We split the sum according to the number of prime factors of \(n\) (counted with multiplicity).

{\bf{Step 1:}} Contribution from integers with at least three prime factors.

If \(\Omega(n)\ge 3\), then \(p(n)\le n^{1/3}<x^{1/3}\). For each prime \(p\le x^{1/3}\), the number of integers \(n<x\) divisible by \(p\) is at most \(x/p\). Hence the total contribution of such \(n\) is
\[
\sum_{\substack{n<x\\ \Omega(n)\ge 3}}p(n)
\le \sum_{p\le x^{1/3}} p \cdot \frac{x}{p}
= x\pi(x^{1/3})
= O\!\left(\frac{x^{4/3}}{\log x}\right).
\]
This gives exactly the error term \(O(x^{4/3}/\log x)\) in the statement.

{\bf{Step 2:}} Contribution from semiprimes.

A semiprime has the form \(n=pq\) with primes \(p\le q\). Its least prime factor is \(p\), so its contribution is \(p\). Thus
\[
A_{\text{semi}}(x):=\sum_{\substack{pq<x\\ p\le q}}p.
\]
For a fixed \(p\le x^{1/2}\), the number of primes \(q\) with \(p\le q\le x/p\) is \(\pi(x/p)-\pi(p)+O(1)\). Multiplying by \(p\) and summing gives an error \(O(\sum_{p\le x^{1/2}}p)=O(x/\log x)\), which is negligible. Hence
\[
A_{\text{semi}}(x)=\sum_{p\le x^{1/2}} p\left(\pi(x/p)-\pi(p)\right)+O\!\left(\frac{x}{\log x}\right).
\]

{\bf{Step 3:}} Estimate of \(S_1(x):=\sum_{p\le x^{1/2}} p\,\pi(x/p)\).

Using Stieltjes integration,
\[
S_1(x)=\int_{2}^{x^{1/2}} t\,\pi(x/t)\,d\pi(t).
\]
By the prime number theorem,
\[
\pi(t)=\frac{t}{\log t}+O\!\left(\frac{t}{(\log t)^2}\right),\qquad
d\pi(t)=\frac{dt}{\log t}+O\!\left(\frac{dt}{(\log t)^2}\right),
\]
and
\[
\pi(x/t)=\frac{x/t}{\log(x/t)}+O\!\left(\frac{x/t}{(\log(x/t))^2}\right).
\]
Substituting yields
\[
S_1(x)=x\int_{2}^{x^{1/2}} \frac{dt}{\log(x/t)\log t}
+O\!\left(x\int_{2}^{x^{1/2}} \frac{dt}{\log(x/t)(\log t)^2}\right)
+O\!\left(x\int_{2}^{x^{1/2}} \frac{dt}{(\log(x/t))^2\log t}\right).
\]
The main term is
\[
S_1^{\text{main}}=x\int_{2}^{x^{1/2}} \frac{dt}{\log(x/t)\log t}.
\]

By Lemma \ref{lem:laplace_cor12}, one has
\[
S_1^{\text{main}}\sim \frac{4x^{3/2}}{(\log x)^2}.
\]
The error terms are \(O(x^{3/2}/(\log x)^3)\), which can be absorbed into the relative error. Hence
\[
S_1(x)=\frac{4x^{3/2}}{(\log x)^2}\left(1+O\left(\frac1{\log x}\right)\right).
\]

{\bf{Step 4:}} Estimate of \(S_2(x):=\sum_{p\le x^{1/2}} p\,\pi(p)\).

Similarly,
\[
S_2(x)=\int_{2}^{x^{1/2}} t\,\pi(t)\,d\pi(t)
= \int_{2}^{x^{1/2}} \frac{t^2}{(\log t)^2}\,dt
+O\!\left(\int_{2}^{x^{1/2}} \frac{t^2}{(\log t)^3}\,dt\right).
\]
The main term is
\[
\int_{2}^{x^{1/2}} \frac{t^2}{(\log t)^2}\,dt.
\]
Using the standard asymptotic
\[
\int_{2}^{y} \frac{t^2}{(\log t)^2}\,dt
= \frac{y^3}{3(\log y)^2}\left(1+O\left(\frac1{\log y}\right)\right),
\]
with \(y=x^{1/2}\), we get
\[
S_2(x)=\frac{x^{3/2}}{3(\frac12\log x)^2}\left(1+O\left(\frac1{\log x}\right)\right)
= \frac{4}{3}\frac{x^{3/2}}{(\log x)^2}\left(1+O\left(\frac1{\log x}\right)\right).
\]

{\bf{Step 5:}} Semiprime contribution.

Combining the estimates for \(S_1\) and \(S_2\),
\[
A_{\text{semi}}(x)=S_1(x)-S_2(x)+O\!\left(\frac{x}{\log x}\right)
= \left(4-\frac43\right)\frac{x^{3/2}}{(\log x)^2}
\left(1+O\left(\frac1{\log x}\right)\right)
= \frac{8}{3}\frac{x^{3/2}}{(\log x)^2}
\left(1+O\left(\frac1{\log x}\right)\right).
\]

{\bf{Step 6:}} Final combination.

Adding the contribution from integers with at least three prime factors gives
\[
A(x)=A_{\text{semi}}(x)+O\!\left(\frac{x^{4/3}}{\log x}\right)
= \frac{8}{3}\frac{x^{3/2}}{(\log x)^2}
\left(1+O\left(\frac1{\log x}\right)\right)
+O\!\left(\frac{x^{4/3}}{\log x}\right),
\]
which proves the corollary.

\end{proof}

\section{Window sums: mean, variance, almost all}\label{sec:windows}

Fix $C>0$, put $L=L_C(x)=Cx^{1/2}(\log x)^2$, and define
$\mu_C(x)$ by \eqref{eq:mu-def}.  Since
$L/x=C(\log x)^2/x^{1/2}\to0$,
\[
\mu_C(x)=\Bigl(1+O\Bigl(\frac{C(\log x)^2}{x^{1/2}}\Bigr)\Bigr)
\frac1x\sum_{\substack{x\le n\le x+L\\n\notin\PP}}p(n)+o(1).
\]
We record the level decomposition used below.  For a prime $p$ let
\[
c_p(x):=\#\Bigl\{m:\ p(m)\ge p,\ \tfrac{x}{p}\le m\le\tfrac{x+L}{p}\Bigr\},
\]
the number of integers $n=pm$ in the window with $p(n)=p$ (up to the
single exception $m=1$, $n=p$, which contributes $O(x^{-1/2})$ and is
ignored).  Then, uniformly in $x$,
\begin{equation}\label{eq:levels}
\mu_C(x)=\sum_{p\le (x+L)^{1/2}}\frac{p}{x}\,c_p(x)+o(1),
\end{equation}
and the levels $p\le x^{1/3}$ contribute $o(1)$ pointwise, since
$c_p(x)\le L/p+1$ and hence
\[
\sum_{p\le x^{1/3}}\frac{p}{x}\,c_p(x)
\le\frac{L}{x}\pi(x^{1/3})+\frac1x\sum_{p\le x^{1/3}}p
=O\Bigl(\frac{x^{1/3}\log x}{x^{1/2}\cdot1}\Bigr)
+O\Bigl(\frac{x^{2/3}}{x\log x}\Bigr)=o(1).
\]
(Indeed $\pi(x^{1/3})=O(x^{1/3}/\log x)$, so the first term is
$O(Cx^{-1/6}\log x)=o(1)$.)

For $x^{1/3}<p\le x^{1/2}$, the integers $m$ counted by $c_p(x)$ are
primes (any composite $m\ge p^2>x^{2/3}$ would exceed
$(x+L)/p\le x^{2/3}+O(x^{1/6}(\log x)^2)$ for large $x$). For \(p>x^{1/3}\), the only composite \(m\) with \(p(m)\ge p\) are those with at least two prime factors \(\ge p\). Such \(m\) satisfy \(m\ge p^2\), and the condition \(m\le (x+L)/p\) implies \(p^3\le x+L\). The contribution of these composites to \(\mu_C(x)\) is \(O\big(\frac{L}{x}\sum_{p\in(x^{1/3},(x+L)^{1/3}]} \frac1p\big)=O(x^{-1/6}(\log x)^2)=o(1)\), hence they can be absorbed into the \(o(1)\) error term. Thus we may write \(c_p(x)=\pi((x+L)/p)-\pi(x/p)+O(1)\) uniformly in the range \(x^{1/3}<p\le x^{1/2}\); 
%so $c_p(x)=\pi((x+L)/p)-\pi(x/p)$ up to $O(1)$; 
and, by the prime number theorem, its mean value over $x$ is
\begin{equation}\label{eq:lambda}
\lambda_p(x):=\frac{L}{p\log(x/p)}\bigl(1+o(1)\bigr),
\end{equation}
uniformly for $x^{1/3}<p\le x^{1/2}$.

\begin{proof}[Proof of Theorem \ref{thm:mean}]
By the level decomposition discussed as above, one has
\[
\mu_C(x)=\sum_{x^{1/3}<p\le x^{1/2}} \frac{p}{x} c_p(x)+o(1),
\]
where $c_p(x)=\#\{m:p(m)\ge p,\, x/p\le m\le (x+L_C(x))/p\}$,
and the contribution from small primes $p\le x^{1/3}$ is pointwise $o(1)$, uniformly for large $x$.

Averaging over $x\le X$:
\[
\frac1X\sum_{x\le X}\mu_C(x)
=\frac1X\sum_{x\le X}\sum_{X^{1/3}<p\le X^{1/2}} \frac{p}{x} c_p(x)+o(1).
\]
Swap finite sums:
\[
\frac1X\sum_{x\le X}\mu_C(x)
=\sum_{X^{1/3}<p\le X^{1/2}} \frac{p}{X} \cdot \frac1X\sum_{x\le X} \frac{X}{x} c_p(x)+o(1).
\]
For each fixed prime $p$ in this range, the average value
\[
\frac1X\sum_{x\le X} c_p(x)
=\frac{L_C(x)}{p\log(x/p)}\,(1+o(1))
\]
as in the discussion preceding this proof.
Substituting $L_C(x)=C x^{1/2}(\log x)^2$,
\[
\frac{p}{x}\cdot \mathbb E_x[c_p(x)]
= \frac{p}{x}\cdot \frac{C x^{1/2}(\log x)^2}{p\log(x/p)}(1+o(1))
=\frac{C(\log x)^2}{x^{1/2}\log(x/p)}(1+o(1)).
\]
Summing over primes $X^{1/3}<p\le X^{1/2}$, we use the same integral evaluation as Lemma \ref{lem:part2}:
\[
\sum_{X^{1/3}<p\le X^{1/2}} \frac{1}{\log(X/p)}
=\frac{4 X^{1/2}}{(\log X)^2}\Bigl(1+O\bigl(\tfrac1{\log X}\bigr)\Bigr).
\]
Multiplying by $C(\log X)^2/X^{1/2}$ gives the main term $4C$. Collecting error terms,
\[
\frac1X\sum_{x\le X}\mu_C(x)=4C+O_C\left(\frac1{\log X}\right),
\]
completing the proof.
\end{proof}

\begin{remark}\label{rem:heuristic_mean}
Interchanging summation,
\[
\sum_{x\le X}\mu_C(x)
=\sum_{n\le X+L(X)}\frac{p(n)}{n}\,
\#\bigl\{x\le X:\ x\le n\le x+L_C(x)\bigr\}.
\]
For $n\in[X^{1/2},X]$ the count of admissible $x$ is
$L_C(n)(1+o(1))$ (the function $L_C$ varies by
$O(L_C(n)^2/n)=o(L_C(n))$ on intervals of length $L_C(n)$). 
and the
boundary contributions ($n<X^{1/2}$, $n>X$) are
$O(\sum_{n<X^{1/2}}p(n))=O(X^{3/4})$ and
$O(L(X)\cdot S(X+L))=o(X)$ respectively.  
For \(n<X^{1/2}\), the contribution is \(O(L_C(X^{1/2})\sum_{n<X^{1/2}}p(n)/n)=O(X^{1/2}(\log X)^2)=o(X)\). For \(n>X\), one has at most \(L_C(X)\) values of \(x\), so the contribution is \(\ll L_C(X)\sum_{n=X}^{X+L}p(n)/n \ll \sqrt{X}(\log X)^4=o(X)\).
Hence
\[
\sum_{x\le X}\mu_C(x)
=\bigl(1+o(1)\bigr)\sum_{n\le X}\frac{p(n)}{n}\,L_C(n)
=C\bigl(1+o(1)\bigr)\sum_{n\le X}\frac{p(n)(\log n)^2}{n^{1/2}}.
\]
\end{remark}

\begin{proof}[Proof of Theorem \ref{thm:var}]
By \eqref{eq:levels} and the pointwise bound for small levels,
$\mu_C(x)=\Sigma_1(x)+\Sigma_2(x)+o(1)$, where
\[
\Sigma_1(x):=\sum_{x^{1/3}<p\le x^{1/2}}\frac{p}{x}\,c_p(x),
\qquad
\Sigma_2(x):=\sum_{p\le x^{1/3}}\frac{p}{x}\,c_p(x)=o(1)\ \text{pointwise}.
\]

Now, the mean of $\Sigma_1$ is studied.

By \eqref{eq:lambda},
\[
\frac1X\sum_{x\le X}\Sigma_1(x)
=\frac{L}{X}\sum_{x^{1/3}<p\le x^{1/2}}\frac1{\log(x/p)}\bigl(1+o(1)\bigr)
=4C+O_C\Bigl(\frac1{\log X}\Bigr),
\]
where the last step uses
$\sum_{x^{1/3}<p\le x^{1/2}}\frac1{\log(x/p)}
=\frac{4x^{1/2}}{(\log x)^2}(1+O(1/\log x))$, proved exactly as in
Lemma \ref{lem:part2} (the range $p\le x^{1/3}$ contributes only
$O(x^{1/3}(\log x)^{-2})$).

Now, the variance of $\Sigma_1$ is obtained.  

For $p\in(x^{1/3},x^{1/2}]$,
\[
\frac1X\sum_{x\le X}c_p(x)^2
=\frac1X\sum_{q,q'\le X/p}
\bigl(L-p|q-q'|\bigr)_+
\le\frac1X\,\pi\Bigl(\frac Xp\Bigr)\Bigl(\frac{2L}{p}+1\Bigr)L
\ll\frac{L^2}{p^2\log(X/p)},
\]
while $(\frac1X\sum_x c_p(x))^2\le\lambda_p^2
\ll\frac{L^2}{p^2(\log(X/p))^2}$.  Hence
$\frac1X\sum_x(c_p(x)-\lambda_p)^2\ll\frac{L^2}{p^2\log(X/p)}$, and
\[
\begin{aligned}
\frac1X\sum_{x\le X}\Bigl(\Sigma_1(x)-\frac1X\sum_{y}\Sigma_1(y)\Bigr)^2
&\le\sum_{x^{1/3}<p\le x^{1/2}}\Bigl(\frac{p}{X}\Bigr)^2
\frac{L^2}{p^2\log(X/p)}\\
&=\frac{L^2}{X^2}\sum_{x^{1/3}<p\le x^{1/2}}\frac1{\log(X/p)}\\
&\ll\frac{C^2(\log X)^2}{X^{1/2}}=o\!\left(\frac{1}{(\log X)^2}\right).
\end{aligned}
\]
The cross terms $p\ne p'$ contribute
$\sum_{p\ne p'}\frac{pp'}{X^2}\operatorname{Cov}(c_p,c_{p'})$; since for
fixed $q$ counted by $c_p$ there are at most $2L/p'+1$ primes $q'$ with
$|pq-p'q'|\le L$, one has
$\operatorname{Cov}(c_p,c_{p'})\ll\lambda_p(2L/p'+1)$, and the total
cross contribution is $O_C((\log X)/X)=o(1)$.  

We now treat cross‑terms $p\neq p'$. Write covariance
\[
\operatorname{Cov}(c_p,c_{p'})
=\mathbb E_X\big[c_p c_{p'}\big]-\mathbb E_X[c_p]\,\mathbb E_X[c_{p'}],
\]
where $\mathbb E_X[\cdot]=\frac1X\sum_{x\le X}(\cdot)$.
The random variables $c_p(x)$ count pairs $(p,q),\,(p',q')$ such that $pq,p'q'$ lie inside the window of length $L$.
For a fixed prime $q$ contributing to $c_p(x)$, the number of primes $q'$ such that $|pq-p'q'|\le L$ is bounded by $O(L/p'+1)$.
Therefore
\[
\mathbb E_X[c_p c_{p'}]
\ll \mathbb E_X[c_p]\cdot \left(\frac{2L}{p'}+1\right)
=\lambda_p\left(\frac{2L}{p'}+1\right).
\]
Since $\mathbb E_X[c_p]\mathbb E_X[c_{p'}]=\lambda_p\lambda_{p'}$ is positive, we obtain
\[
\operatorname{Cov}(c_p,c_{p'})
\ll \lambda_p\left(\frac{L}{p'}+1\right).
\]
Recall $\lambda_p \asymp \dfrac{L}{p\log(X/p)}$.
The cross‑term contribution to the second moment is
\[
\sum_{\substack{p,p'\in(X^{1/3},X^{1/2}]\\ p\neq p'}}
\frac{p}{X}\frac{p'}{X}\operatorname{Cov}(c_p,c_{p'})
\ll
\sum_{p,p'} \frac{p p'}{X^2}\cdot \frac{L}{p\log(X/p)} \left(\frac{L}{p'}+1\right).
\]
Split into two sums:
\[
S_A=\sum_{p,p'}\frac{p p'}{X^2}\cdot \frac{L}{p\log(X/p)}\cdot \frac{L}{p'}
=\frac{L^2}{X^2}\sum_{p,p'}\frac{1}{\log(X/p)}
\ll \frac{L^2}{X^2}\cdot X^{1/2}\cdot X^{1/2}\cdot \frac{1}{(\log X)^2}.
\]
Substitute $L=C X^{1/2}(\log X)^2$:
\[
S_A\ll \frac{C^2 X (\log X)^4}{X^2}\cdot \frac{X}{(\log X)^2}= C^2 \frac{(\log X)^2}{X}=o\big(1/(\log X)^2\big).
\]
Second part:
\begin{align*}
&S_B=\sum_{p,p'}\frac{p p'}{X^2}\cdot \frac{L}{p\log(X/p)}\cdot 1
=\frac{L}{X^2}\sum_{p,p'}\frac{p'}{\log(X/p)}\\
\ll& \frac{L}{X^2}\cdot X^{1/2}\cdot X
= C \frac{X^{1/2}(\log X)^2}{X^2}\cdot X^{3/2}= O_C\left(\frac{(\log X)^2}{X}\right).
\end{align*}
Both $S_A,S_B$ are $o\big(1/(\log X)^2\big)$.
Hence the total cross‑covariance contribution is negligible compared to the diagonal contribution.

Finally
$\frac1X\sum_x\Sigma_2(x)^2\le
\sum_{p\le X^{1/3}}(p/X)^2(L/p+1)^2
\ll C^2(\log X)^3 X^{-2/3}=o(1)$.
Combining the mean computation with the variance bound gives
\[
\frac1X\sum_{x\le X}(\mu_C(x)-4C)^2
\ll\Bigl(\frac1{\log X}\Bigr)^2+\frac{C^2(\log X)^2}{X^{1/2}}+o(1)
=O_C\Bigl(\frac1{(\log X)^2}\Bigr),
\]
as claimed, where the remaining error terms are \(O((\log X)^2 X^{-1/2}) + O((\log X)^3 X^{-2/3}) + O(\log X / X)\), all of which are \(o((\log X)^{-2})\), hence can be absorbed into the final \(O_C((\log X)^{-2})\).
\end{proof}

\begin{proof}[Proof of Corollary \ref{cor:almost-all}]
By Chebyshev's inequality,
\[
\#\left\{x\le X: |\mu_C(x)-4C|>\varepsilon\right\}
\le \frac{1}{\varepsilon^2}\sum_{x\le X}\big(\mu_C(x)-4C\big)^2.
\]
From Theorem \ref{thm:var},
\[
\frac1X\sum_{x\le X}(\mu_C(x)-4C)^2=O_C\big(1/(\log X)^2\big),
\]
so
\[
\#\{\cdots\}\ll \frac{X}{\varepsilon^2 (\log X)^2}=o(X),
\]
as $X\to\infty$.
\end{proof}

\begin{remark}\label{rem:variance}
The variance of $\mu_C$ about its true mean is in fact
$O_C(X^{-1/2}(\log X)^2)$; the $O((\log X)^{-2})$ in
Theorem \ref{thm:var} is dominated by the systematic correction
$|\frac1X\sum_x\mu_C(x)-4C|=O_C(1/\log X)$.
Almost-all statements for primes and almost primes in short intervals
have a long tradition (see e.g.\ Matom\"aki \cite{Matomaki} for almost
primes in almost all very short intervals); Corollary \ref{cor:almost-all}
is of the same nature, but concerns weighted least-prime-factor sums in
the much longer windows of length $Cx^{1/2}(\log x)^2$, and its proof
is elementary.
\end{remark}

\section{Uniformity for all $x$}\label{sec:uniform}

\begin{proof}[Proof of Theorem \ref{thm:uniform}]
Assume Hypothesis \ref{hyp:SI}. Fix \(C>0\) and put
\[
L=L_C(x):=C x^{1/2}(\log x)^2,\qquad I:=[x,x+L].
\]

We use the level decomposition from Section \ref{sec:windows}:
\begin{equation}
\mu_C(x)=\sum_{p\le (x+L)^{1/2}}\frac{p}{x}\,c_p(x)+o(1),
\label{eq:level}
\end{equation}
where
\[
c_p(x):=\#\bigl\{m: p(m)\ge p,\; x/p\le m\le (x+L)/p\bigr\}.
\]
The term \(o(1)\) is uniform in \(x\) and comes from the prime \(m=1\) and endpoint effects; it will be absorbed into the final error.

The sum over primes \(p\) is divided into three ranges.

{\bf{Case 1.}} Small primes: \(p\le x^{1/3}\)

By \eqref{eq:levels} and the pointwise
bound of Section \ref{sec:windows},
(which is uniform in \(x\)), one has
\[
\sum_{p\le x^{1/3}}\frac{p}{x}\,c_p(x)
\le
\frac{L}{x}\pi(x^{1/3})+\frac1x\sum_{p\le x^{1/3}}p
\ll
\frac{Cx^{1/2}(\log x)^2}{x}\cdot \frac{x^{1/3}}{\log x}
+\frac{1}{x}\cdot \frac{x^{2/3}}{\log x}
\ll
C x^{-1/6}\log x + x^{-1/3}/\log x,
\]
which tends to \(0\) uniformly in \(x\) for fixed \(C\). Hence
\[
\sum_{p\le x^{1/3}}\frac{p}{x}c_p(x)=o(1)
\]
uniformly, and this contribution can be absorbed into the final \(O_C(1/\log x)\) error.

{{\bf Case 2.}} Medium primes: \(x^{1/3}<p\le x^{1/2}\)

For such \(p\), any composite \(m\) with \(p(m)\ge p\) has at least two prime factors \(\ge p\), so \(m\ge p^2\). The condition \(m\le (x+L)/p\) then implies \(p^3\le x+L\), i.e. \(p\le (x+L)^{1/3}\). The contribution of these composite \(m\)'s is
\begin{align*}
&\sum_{x^{1/3}<p\le (x+L)^{1/3}}\frac{p}{x}\,
\#\{m\ge p^2: p(m)\ge p,\; m\le (x+L)/p\}\\
\ll&
\frac{L}{x}\sum_{x^{1/3}<p\le (x+L)^{1/3}}\frac{1}{p}
\ll
\frac{L}{x}\cdot \frac{x^{1/6}}{\log x}
=
O\!\left(C x^{-1/3}(\log x)^2\right)=o(1).
\end{align*}
Thus, up to a uniform \(o(1)\) error, one may write
\begin{equation}
c_p(x)=\pi\!\left(\frac{x+L}{p}\right)-\pi\!\left(\frac{x}{p}\right)+O(1).
\label{eq:cp}
\end{equation}
The \(O(1)\) term, when multiplied by \(p/x\) and summed over \(p\le x^{1/2}\), gives
\[
O\!\left(\frac{1}{x}\sum_{p\le x^{1/2}}p\right)
=
O\!\left(\frac{1}{\log x}\right),
\]
which is acceptable.

Set
\[
y_{p}:=\frac{x}{p},\qquad h_{p}:=\frac{L}{p}.
\]
Since $p\le x^{1/2}$, we have
\[
\log y_p=\log(x/p)\ge \log x-\log(x^{1/2})=\frac12\log x.
\]
Recall $L=C x^{1/2}(\log x)^2$, so
\[
h_p=\frac{L}{p}= \frac{C x^{1/2}(\log x)^2}{p}
\ge \frac{C x^{1/2}(\log x)^2}{x^{1/2}}=C(\log x)^2.
\]
Combining,
\[
h_p \ge C(\log x)^2 \ge C\cdot \big(2\log y_p\big)^2
=4C (\log y_p)^2.
\]
Thus for any fixed $C_0>0$, for sufficiently large $x$, $h_p\ge C_0(\log y_p)^2$, so the lower‑bound condition of Hypothesis \ref{hyp:SI} is satisfied.
Apply Hypothesis 1.6 with $A=2$:
\[
\pi(y_p+h_p)-\pi(y_p)=\frac{h_p}{\log y_p}+O\left(\frac{h_p}{(\log y_p)^2}\right).
\]

Substituting \(h_p=L/p\) and \(\log y_p=\log(x/p)\), we get
\begin{equation}
c_p(x)=\frac{L}{p\log(x/p)}
+
O\!\left(\frac{L}{p(\log x)^2}\right)
+
O(1).
\label{eq:cp_approx}
\end{equation}

Summing \eqref{eq:cp_approx} over \(p\in(x^{1/3},x^{1/2}]\), the main term gives
\begin{equation}
\frac{L}{x}\sum_{x^{1/3}<p\le x^{1/2}}\frac{1}{\log(x/p)}
=
4C+O_C\!\left(\frac1{\log x}\right),
\label{eq:main}
\end{equation}
which follows from the same computation as in Lemma \ref{lem:part2} (the integral evaluation). The error from the \(O(L/(p(\log x)^2))\) part is
\[
\frac{L}{x}\sum_{x^{1/3}<p\le x^{1/2}}\frac{1}{(\log x)^2}
=
\frac{L}{x}\cdot \frac{\pi(x^{1/2})}{(\log x)^2}
=
\frac{Cx^{1/2}(\log x)^2}{x}\cdot \frac{2x^{1/2}}{(\log x)^3}
=
O_C\!\left(\frac1{\log x}\right).
\]
The \(O(1)\) error from \eqref{eq:cp} contributes \(O(1/\log x)\) as noted above. Hence the medium-prime contribution equals
\[
4C+O_C\!\left(\frac1{\log x}\right).
\]

{{\bf Case 3.}} Large primes: \(p>x^{1/2}\)

For \(p>\sqrt{x}\), any \(m\ge2\) with \(p(m)\ge p\) satisfies \(m\ge p\), so \(n=pm\ge p^2>x\). The condition \(n\le x+L\) then implies \(p\le \sqrt{x+L}\). Thus only primes \(p\in(\sqrt{x},\sqrt{x+L}]\) can contribute, and for each such \(p\), the possible \(m\) are \(m=1\) (giving \(n=p\), but \(n\) must be composite; however, this term is already excluded in \(\mu_C(x)\)) or \(m\ge p\) with \(pm\le x+L\). The number of such \(m\) is at most \(O((x+L)/p^2)\le O(1)\) (since \(p^2\ge x\)). Therefore \(c_p(x)=O(1)\) for these \(p\). Hence the contribution is
\begin{align*}
&\sum_{\sqrt{x}<p\le \sqrt{x+L}}\frac{p}{x}\,O(1)
\ll
\frac{1}{x}\sum_{\sqrt{x}<p\le \sqrt{x+L}}p
\ll
\frac{1}{x}\cdot \sqrt{x}\cdot \#\{p\}
\ll
\frac{1}{x}\cdot \sqrt{x}\cdot \frac{L/\sqrt{x}}{\log x}\\
=&
\frac{L}{x\log x}
=
\frac{Cx^{1/2}(\log x)^2}{x\log x}
=
\frac{C\log x}{x^{1/2}}=o(1).
\end{align*}
Thus the large-prime contribution is uniformly \(o(1)\).

Combining the three ranges, we obtain
\[
\mu_C(x)
=
\frac{L}{x}\sum_{x^{1/3}<p\le x^{1/2}}\frac{1}{\log(x/p)}
+
O_C\!\left(\frac1{\log x}\right)
+
o(1).
\]
Using \eqref{eq:main} and absorbing the \(o(1)\) into the \(O_C(1/\log x)\) (since it is of smaller order), we get
\[
\mu_C(x)=4C+O_C\!\left(\frac1{\log x}\right)
\]
uniformly for all sufficiently large \(x\). This proves Theorem \ref{thm:uniform}.
\end{proof}

\begin{remark}[Obstructions to an unconditional proof]\label{rem:obstr}
Theorem \ref{thm:var} shows that the fluctuation of $\mu_C(x)$ about
$4C$ has variance $o(1)$ over $x\le X$; what is missing for a uniform
statement is control of the \emph{maximal} deviation.  The level
structure shows that any window with $\mu_C(x)\le\delta$ must have all
levels $x^{1/3}<p\le x^{1/2}$ simultaneously deficient, which is a
large-deviation statement for the primes in intervals of length
$\asymp(\log x)^2$ at height $\asymp x^{1/2}$ --- precisely the scale of
the Cram\'er gap conjecture.  No unconditional result (e.g.\ the
$y^{0.525}$-type results of Baker--Harman--Pintz \cite{BHP}) applies at
this scale.  The gap bound $p_{n+1}-p_n\ll p_n^{0.525}$ from Baker–Harman–Pintz 
gives a power-law gap, which is far larger than $(\log y)^2$; hence it cannot 
control fluctuations at the logarithmic‑squared scale.
Note also that, as observed by Tao \cite{Bloom-thread},
if the original problem intended the sums to include primes, then the
prime contribution alone is a Legendre-type assertion about primes in
short intervals; the composite mechanism developed here bypasses that
difficulty (the composite part alone already gives $\gg1$), but still
requires Hypothesis \ref{hyp:SI} for the uniform statement.
\end{remark}

\begin{remark}[C and the implicit constants]
In Theorem \ref{thm:uniform}, the implicit constants depend on $C$
and on the constants in Hypothesis \ref{hyp:SI}; in particular no
uniformity in $C$ is claimed.  The statement ``$\mu_C(x)\gg1$'' holds
with the constant $2C$ (say) once $x$ is large enough relative to $C$.
\end{remark}

\section*{Acknowledgements}
The author thanks T.~F.~Bloom's Erd\H{o}s problems database
\cite{Bloom} for maintaining the problem list and the discussion
thread, and T.~Tao for the comment \cite{Bloom-thread} that clarified
the two readings of the problem.

\end{document}